\title[Growth Estimates for Solutions to the Wave Equation on Damek--Ricci Spaces]{Growth Estimates for Solutions to \\ the Wave Equation on Damek--Ricci Spaces}
\author{Yunxiang Wang, Lixin Yan, and Hong-Wei Zhang}
\date{}
\subjclass[2020]{43A85, 22E30, 42B15, 35L20}
\keywords{Wave equation, Damek--Ricci space, sharp $L^p$-estimate, Hardy space, Fourier integral operators.}
\numberwithin{equation}{section}
\newtheorem{theorem}{Theorem}
\newtheorem{lemma}[theorem]{Lemma}
\newtheorem{proposition}[theorem]{Proposition}
\theoremstyle{definition}
\theoremstyle{remark}
\numberwithin{theorem}{section}
\newcommand{\R}{\mathbb{R}}
\newcommand{\BB}{\mathbb{B}}
\newcommand{\e}{\mathrm{e}}
\newcommand{\Id}{\mathrm{Id}}
\newcommand{\supp}{\mathrm{supp}}
\newcommand{\bc}{\mathbf{c}}
\newcommand{\F}{\mathscr{F}}
\newcommand{\g}{\mathbf{g}}
\newcommand{\fa}{\mathfrak{a}}
\newcommand{\fn}{\mathfrak{n}}
\newcommand{\fv}{\mathfrak{v}}
\newcommand{\fz}{\mathfrak{z}}
\newcommand{\fs}{\mathfrak{s}}
\newcommand{\fh}{\mathfrak{h}}
\newcommand{\cS}{\mathcal{S}}
\renewcommand{\Re}{\mathrm{Re}}
\renewcommand{\Im}{\mathrm{Im}}
\renewcommand{\SS}{\mathbb{S}}
\renewcommand{\i}{\mathrm{i}}
\renewcommand{\d}{\mathrm{d}}
\renewcommand{\L}{\mathcal{L}}
\newcommand{\red}{\color{red}}
\begin{document}

\begin{abstract}
Let $\mathcal{L}$ be the left-invariant distinguished Laplacian, and let $\mathrm{d}\rho$ denote the right Haar measure on a Damek--Ricci space $S$. Let $u(t,x)$ denote the solution to the wave equation $\partial_t^2 u-\mathcal{L} u=0$ with initial data $(u,\partial_t u)|_{t=0}=(f,g)$. In this paper, we establish the sharp-in-regularity $L^p$ bounds
\begin{align*}
\|u(t,\cdot)\|_{L^p(S ,\mathrm{d}\rho)}
\lesssim_p(1+|t|)^{2|\frac{1}{p}-\frac{1}{2}|}\|(\mathrm{Id}+\mathcal{L})^{\frac{\alpha_0}{2}}\!f\|_{L^p(S ,\mathrm{d}\rho)}+(1+|t|)\,\|(\mathrm{Id}+\mathcal{L})^{\frac{\alpha_1}{2}}\!g\|_{L^p(S,\mathrm{d}\rho)}
\end{align*}
for all $t\in\mathbb{R}^*$ and $1<p<\infty$, where the exponents $\alpha_0 = n\left|1/p-1/2\right|$ and $\alpha_1 = n\left|1/p-1/2\right| -1$ attain their critical values. This result settles, in full generality, the conjecture raised by Müller, Thiele, and Vallarino.
\end{abstract}

\maketitle


\section{Introduction}\label{sec1}

The wave equation has long played a central role in analysis and partial differential equations on manifolds, where the non-flat geometry strongly influences wave propagation. In particular, on negatively curved manifolds, the long-time dispersive estimates for the wave equation differ significantly from the classical results in the Euclidean setting. This phenomenon has been observed in various contexts, for instance, in \cite{AP14,MT11,Tat01} on real hyperbolic spaces, in \cite{AZ24,Zha21} on their higher-rank generalizations, that is, on noncompact symmetric spaces, in \cite{SSWZ20} on non-trapping asymptotically hyperbolic manifolds, and in \cite{APV15} on the so-called Damek--Ricci spaces.

Damek–Ricci spaces are solvable extensions $S = N \rtimes \mathbb{R}^+$ of Heisenberg-type groups $N$, endowed with a left-invariant Riemannian structure. As Riemannian manifolds, these solvable Lie groups include important examples such as real hyperbolic spaces and all other noncompact rank-one symmetric spaces. Most of them, however, are not symmetric, thereby providing counterexamples to the Lichnerowicz conjecture \cite{DaRi1}. We refer to Section \ref{sec2} for more details on the structure of $S$ and the analysis carried out on it.

The dispersive effect of the wave equation, manifested through the $L^{p'}$-$L^p$ estimates of the propagator, describes how the solution spreads out over time. In contrast, the $L^p$ ($p\neq2$) norm estimates for solutions to the wave equation may not exhibit decay but instead show growth in time. The following Sobolev estimate was first established by Müller and Thiele \cite{MuTh} on the group model of the real hyperbolic space, namely the $ax+b$ group, which corresponds to the case where the nilpotent part $N$ is abelian and thus represents the simplest instance of a Damek–Ricci space. It was later extended to the full class of Damek–Ricci spaces by Müller and Vallarino \cite{MuVa}.

\begin{theorem}[Müller\&Vallarino \cite{MuVa}]\label{thm:MullerVallarino}
Let $S$ be a Damek–Ricci space of dimension $n$, and let $\L$ denote the left-invariant distinguished Laplacian and $\d\rho$ the right Haar measure on $S$. Then, for all $1<p<\infty$, there exists $C_p>0$ such that the solution to the Cauchy problem
\begin{align}\label{eq: wave}
\begin{cases}
\partial_t^2 u(t,x)-\L u(t,x)=0,
\qquad t\in\mathbb{R},\,x\in S,  \\
u(0,t)=f(x),\,\partial_t u(t,x)|_{t=0}=g(x),
\end{cases}
\end{align}
satisfies the estimate 
\begin{align}\label{est:mainthm}
\|u(t,\cdot)\|_{L^p(S,\d\rho)}
\leq C_p\,
\Big( (1+|t|)^{2|\frac{1}{p}-\frac{1}{2}|}\|(\Id+\L)^{\frac{\alpha_0}{2}}\!f\|_{L^p(S,\d\rho)}+(1+|t|)\,\|(\Id+\L)^{\frac{\alpha_1}{2}}\!g\|_{L^p(S,\d\rho)}\Big),
\end{align}
for all $t\in\mathbb{R}^*$, provided that 
\begin{align*}
\alpha_0>(n-1)\left|{1\over p}-{1\over 2}\right|
\quad\text{and}\quad \alpha_1>(n-1)\left|{1\over p}-{1\over 2}\right|-1.
\end{align*}
\end{theorem}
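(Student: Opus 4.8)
The plan is to reduce, via the spectral theorem for $\L$, the estimate to uniform $L^p$-bounds for two radial Fourier multipliers of $\L$, to render the attached spherical analysis one-dimensional through the Abel transform of $S$, and then to estimate the resulting convolution kernels dyadically in the spectral variable, feeding the output into a Kunze--Stein type criterion for radial convolvers and closing by complex interpolation. First I would write the solution of \eqref{eq: wave} as $u(t,\cdot)=\cos(t\sqrt{\L})\,f+\L^{-1/2}\sin(t\sqrt{\L})\,g$ and commute in the Bessel potentials, so that it suffices to establish, uniformly for $t\in\R^*$,
\begin{align*}
\bigl\|m_t^{0}(\sqrt{\L})\bigr\|_{L^p(\d\rho)\to L^p(\d\rho)}&\lesssim(1+|t|)^{2|\frac1p-\frac12|},\\
\bigl\|m_t^{1}(\sqrt{\L})\bigr\|_{L^p(\d\rho)\to L^p(\d\rho)}&\lesssim 1+|t|,
\end{align*}
where $m_t^{0}(\lambda)=(1+\lambda^2)^{-\alpha_0/2}\cos(t\lambda)$, $m_t^{1}(\lambda)=(1+\lambda^2)^{-\alpha_1/2}\lambda^{-1}\sin(t\lambda)$, and $\lambda\in[0,\infty)$ is the spectral variable of $\sqrt{\L}$. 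Since the high-frequency part of $m_t^{1}$ behaves exactly like that of $m_t^{0}$ once $\alpha_0=\alpha_1+1$, while near $\lambda=0$ the factor $\lambda^{-1}\sin(t\lambda)$ only contributes the bare size $1+|t|$, the treatment of $m_t^{1}$ runs parallel to, and is no harder than, that of $m_t:=m_t^{0}$, on which I now concentrate; since $\L$ is self-adjoint on $L^2(S,\d\rho)$ and $m_t$ is real, $L^2$-duality reduces matters to $1<p\le2$. The operator $m_t(\sqrt{\L})$ is convolution on $S$ by a radial kernel $W_t$ with spherical (Jacobi) transform $m_t$; via the inversion formula $W_t(r)=c\int_0^\infty m_t(\lambda)\,\varphi_\lambda(r)\,|\bc(\lambda)|^{-2}\,\d\lambda$ --- or, equivalently, the identity $W_t=\mathcal A^{-1}\bigl[\widetilde m_t\bigr]$, with $\widetilde m_t(r)=\tfrac12\int_\R m_t(\lambda)\,\e^{\i\lambda r}\,\d\lambda$ the $\alpha_0$-smoothed one-dimensional d'Alembert kernel and $\mathcal A^{-1}$ the explicit inverse Abel transform of the Damek--Ricci space (a composition of fractional integro-differentiations in $r$ with exponential weights encoding the root multiplicities of $S$) --- I obtain a workable formula exhibiting $W_t$ as a function concentrated near the light cone $r\approx|t|$, with a singularity there improved by $\alpha_0$.

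Next I would decompose $m_t=\sum_{j\ge0}m_t^{(j)}$, with $m_t^{(0)}$ supported where $\lambda\lesssim1$ and $m_t^{(j)}$ where $\lambda\sim2^{j}$, hence $W_t=\sum_{j}W_t^{(j)}$, and control the $L^p(\d\rho)$-operator norm of convolution by each $W_t^{(j)}$ through the Herz/Kunze--Stein criterion for radial convolvers on $S$: for $1<p\le2$ that norm is dominated by a weighted $L^1$-integral $\int_S|W_t^{(j)}|\,\omega_p\,\d\rho$, whose weight $\omega_p$ interpolates between the ground spherical function $\varphi_0$ at $p=1$ and the constant $1$ at $p=2$, while at $p=2$ one has simply $\|m_t(\sqrt{\L})\|_{2\to2}\le\|m_t\|_\infty\le1$, with no regularity loss and no growth. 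Inserting the kernel formula and carrying out a stationary/non-stationary phase analysis in $\lambda$ of the inversion integral --- the phase $t\lambda\pm(\text{phase of }\varphi_\lambda(r))$ being stationary precisely on $r\approx|t|$, with rapid decay in $2^{j}$ away from the cone --- I expect, at the $p=1$ endpoint,
\begin{align*}
\bigl\|W_t^{(j)}*f\bigr\|_{L^1(\d\rho)}&\lesssim 2^{j(\frac{n-1}{2}-\alpha_0)}(1+|t|)\,\|f\|_{L^1(\d\rho)}\quad(j\ge1),\\
\bigl\|W_t^{(0)}*f\bigr\|_{L^1(\d\rho)}&\lesssim(1+|t|)\,\|f\|_{L^1(\d\rho)},
\end{align*}
the linear growth of the low-frequency piece stemming from the spectrum near $\lambda=0$, where the Plancherel density $|\bc(\lambda)|^{-2}$ of the \emph{distinguished} Laplacian does not vanish and thus supplies no extra smoothing. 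Summing the geometric series over $j$ converges exactly when $\alpha_0>\tfrac{n-1}{2}$, which is the $p=1$ instance of the hypothesis, and yields $\|m_t(\sqrt{\L})\|_{L^1\to L^1}\lesssim1+|t|$.

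Finally I would apply Stein's analytic interpolation to the family of multiplier operators with symbols $(1+\lambda^2)^{-z/2}\cos(t\lambda)$, interpolating between the $L^2$-line (no loss, time-growth $1$) and the $L^1$-line (loss $\tfrac{n-1}{2}$, time-growth $1+|t|$): writing $\tfrac1p=\theta+\tfrac{1-\theta}{2}$, i.e.\ $\theta=\tfrac2p-1$, this produces the bound on $L^p(\d\rho)$ for $1<p\le2$ with critical exponent $\alpha_0(p)=\theta\cdot\tfrac{n-1}{2}=(n-1)\bigl(\tfrac1p-\tfrac12\bigr)$ and growth $(1+|t|)^{\theta}=(1+|t|)^{2(\frac1p-\frac12)}$; since $\alpha_0>(n-1)\bigl|\tfrac1p-\tfrac12\bigr|$ is an open condition, an $\varepsilon$ of regularity comfortably covers the stated range. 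The range $2<p<\infty$ then follows by $L^2(\d\rho)$-duality, and $m_t^{1}$ is handled identically --- with $\alpha_1=\alpha_0-1$ absorbing the factor $\lambda^{-1}$ and the residual $1+|t|$ arising from $\lambda^{-1}\sin(t\lambda)$ near $\lambda=0$ --- which completes the argument.

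I expect the genuine obstacle to be the sharp dyadic kernel estimate above: extracting \emph{simultaneously} the correct Sobolev gain $2^{-j\alpha_0}$ against the high-frequency cost $2^{j(n-1)/2}$ and the exactly linear growth in $|t|$. This forces one to work with fine asymptotics of the Jacobi functions $\varphi_\lambda(r)$, uniform across the regimes $\lambda r\lesssim1$ and $\lambda r\gg1$, on a \emph{general} (hence, in most cases, non-symmetric) Damek--Ricci space, for which no symmetric-space machinery is available and one must instead exploit the explicit Abel/Radon transform of $S$ and its inversion; it also requires a delicate phase analysis isolating the light cone and the careful transport of the convolver weight $\omega_p$ through the radial integration. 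A subsidiary but indispensable ingredient is the radial-convolver criterion itself on the non-unimodular group $S$, taken with respect to the right Haar measure $\d\rho$ and the distinguished Laplacian --- whose Plancherel measure has no spectral gap --- this last feature being precisely what produces polynomial growth in $|t|$ rather than the exponential decay enjoyed by the Laplace--Beltrami operator.
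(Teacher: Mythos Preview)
Your outline is essentially the Müller--Vallarino argument, and it is correct for the \emph{non-endpoint} statement at hand: spectral representation of $u$, reduction to the two multipliers $m_t^{0},m_t^{1}$, dyadic decomposition in $\lambda$, the $L^1$-kernel estimate $\|W_t^{(j)}\|_{L^1(\d\rho)}\lesssim 2^{j((n-1)/2-\alpha_0)}(1+|t|)$ (this is exactly \cite[(6.3)]{MuVa} as invoked in the paper), summation of the geometric series when $\alpha_0>(n-1)/2$, and analytic interpolation against the trivial $L^2$-bound. The paper does not reprove Theorem~\ref{thm:MullerVallarino}; it is quoted as background from \cite{MuVa}. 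What the paper actually proves is the endpoint Theorem~\ref{thm:main}, and there the route is genuinely different: the dyadic sum diverges at $\alpha_0=(n-1)/2$, so instead the authors write $\cos(\lambda t)$ as an explicit combination of $A(t)^{1/2}\varphi_\lambda(t)$ and $A(t)^{1/2}\varphi_\lambda'(t)$ plus lower-order symbols (Lemmas~\ref{lem:spherical long time} and \ref{lem:spherical short time}), realize $\varphi_{\sqrt{\L}}(t)$ as a spherical average via \eqref{e4.1}, bound the two principal pieces by the $\fh^1\!\to\!L^1$ multiplier theorem (Lemma~\ref{lem2.6}) composed with the pointwise spherical estimates of Lemma~\ref{lem2.1}, and then interpolate from $\fh^1$ rather than from $L^1$. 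Your scheme recovers the open range but cannot reach the endpoint; theirs does both.

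Two small corrections to your sketch. First, the ``Herz/Kunze--Stein criterion'' is not the right label here: $S$ is solvable and non-unimodular, and no Kunze--Stein phenomenon is used or needed. What actually controls the $L^1(\d\rho)\!\to\!L^1(\d\rho)$ norm of convolution by $W_t^{(j)}=\delta^{1/2}\kappa^{(j)}$ is simply $\|W_t^{(j)}\|_{L^1(\d\rho)}=\int_0^\infty \kappa^{(j)}(r)\,V(r)\,\d r$ via Lemma~\ref{lem2.2}; you then interpolate with $L^2$, exactly as you end up doing. Second, your heuristic that ``$|\bc(\lambda)|^{-2}$ does not vanish at $\lambda=0$'' is false: $\bc(\lambda)$ has a pole at $\lambda=0$ (from the factor $\Gamma(2\i\lambda)$ in \eqref{e2.11}), so $|\bc(\lambda)|^{-2}$ vanishes there. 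The linear growth in $|t|$ at low frequency is not a density effect; it comes from the absence of a spectral gap for $\L$ (so the multiplier is genuinely evaluated near $\lambda=0$) together with the weight $V(r)\sim r\,\e^{Qr/2}$ in Lemma~\ref{lem2.2} against a kernel spread over $\{r\lesssim|t|\}$. Your final paragraph gets this right (``no spectral gap''); just drop the incorrect Plancherel-density explanation.
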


Müller and Thiele on the $ax+b$ group \cite[Remark 8.1]{MuTh}, and Müller and Vallarino on Damek–Ricci spaces \cite[Remark 2]{MuVa}, conjectured that estimate \eqref{est:mainthm} should remain valid at the endpoint cases $\alpha_0=(n-1)\left|1/p-1/2\right|$ and $\alpha_1=(n-1)\left|1/p-1/2\right|-1$. This conjecture was recently confirmed on the $ax+b$ group by Wang and Yan \cite{WaYa}. The following result extends this conclusion to the entire class of Damek–Ricci spaces.

\begin{theorem}\label{thm:main}
Theorem \ref{thm:MullerVallarino} remains valid at the endpoint cases 
\begin{align*}
\alpha_0=(n-1)\left|{1\over p}-{1\over 2}\right|
\quad\text{and}\quad \alpha_1=(n-1)\left|{1\over p}-{1\over 2}\right|-1,
\end{align*}
and these regularity conditions are sharp.
\end{theorem}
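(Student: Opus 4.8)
\emph{The plan.} The plan is to reduce the endpoint estimate to a single Hardy-space bound at $p=1$ and then recover the range $1<p<\infty$ by interpolation and duality. Writing $u(t,\cdot)=\cos(t\sqrt{\L})f+\L^{-1/2}\sin(t\sqrt{\L})\,g$ through the functional calculus of $\L$, the theorem reduces to the $L^p(\d\rho)$-boundedness of
\[
\mathcal{W}_t^{s}:=(\Id+\L)^{-s/2}\cos\!\big(t\sqrt{\L}\big),\qquad
\widetilde{\mathcal{W}}_t^{s}:=(\Id+\L)^{-s/2}\,\L^{-1/2}\sin\!\big(t\sqrt{\L}\big),
\]
with norms $\lesssim(1+|t|)^{2|1/p-1/2|}$ for $\mathcal{W}_t^{\alpha_0}$ and $\lesssim1+|t|$ for $\widetilde{\mathcal{W}}_t^{\alpha_1}$, at the critical $\alpha_0=(n-1)|1/p-1/2|$ and $\alpha_1=\alpha_0-1$. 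Since $\L$ is self-adjoint on $L^2(\d\rho)$, so are these operators, so by duality (equivalently, by the $L^\infty\!\to\!\BMO$ endpoint) it suffices to treat $1<p\le2$; at $p=2$ the spectral theorem gives $\|\mathcal{W}_t^{0}\|_{2\to2}\le1$ and $\|\widetilde{\mathcal{W}}_t^{-1}\|_{2\to2}\lesssim1+|t|$. Interpolating these $L^2$-bounds against $H^1(S)\to L^1(\d\rho)$-bounds at $p=1$ — where $\alpha_0=\tfrac{n-1}{2}$, $\alpha_1=\tfrac{n-3}{2}$ — via $[H^1(S),L^2(\d\rho)]_\theta=L^{p_\theta}(\d\rho)$ with $1/p_\theta=1-\theta/2$, the whole theorem reduces to
\[
\big\|\mathcal{W}_t^{(n-1)/2}\big\|_{H^1(S)\to L^1(\d\rho)}\lesssim1+|t|\qquad\text{and}\qquad
\big\|\widetilde{\mathcal{W}}_t^{(n-3)/2}\big\|_{H^1(S)\to L^1(\d\rho)}\lesssim1+|t|,
\]
$H^1(S)$ being the Hebisch--Steger atomic Hardy space built from Calder\'on--Zygmund sets; one checks that the interpolated regularity and time-growth exponents are exactly those in the statement. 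For $|t|\le1$ these are fixed-time estimates of Miyachi--Peral/Seeger--Sogge--Stein type transplanted to $S$, to which the argument below applies a fortiori, so we assume $|t|\ge1$.

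\emph{Kernel analysis.} Via the spherical (Jacobi) transform on $S$ and its inversion formula, $\mathcal{W}_t^{s}$ is a convolution operator against a radial kernel, and a dyadic decomposition $1=\sum_{j\ge0}\phi_j(\sqrt{\L})$ in the spectral variable splits it as $k_t=\sum_{j\ge0}k_{t,j}$, each $k_{t,j}$ being (schematically) the oscillatory integral
\[
k_{t,j}(x)=\int_{\R}\cos(t\lambda)\,\chi_j(\lambda)\,(1+\lambda^2)^{-s/2}\,\varphi_\lambda(x)\,|\bc(\lambda)|^{-2}\,\d\lambda,
\]
with $\varphi_\lambda$ the spherical functions, $\bc$ the Harish--Chandra function and $\chi_j$ supported in $|\lambda|\sim2^j$. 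Split $S$ into the shell $\{\,|\,d(x,e)-|t|\,|\le1\,\}$ about the light cone and its complement. Off the light cone the relevant phase ($\pm t\lambda$ together with that of $\varphi_\lambda$) is non-stationary, so repeated integration by parts in $\lambda$ gains arbitrarily many powers of $2^{-j}$; combined with the exponential decay of $\varphi_\lambda$ in $d(x,e)$ weighed against $\d\rho$, these contributions are summable in $j$ with the desired time-growth and are treated as in M\"uller--Vallarino. On the light-cone shell the phase is stationary; there, once the correct power of the modular function is extracted, $k_{t,j}$ resembles the restriction to a bounded-width annulus of the Euclidean wave kernel of frequency $2^j$ and order $-\alpha_0$. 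Crucially, the exponential decay of $\varphi_\lambda$ in $d(x,e)$, balanced against the exponential growth of $\d\rho$, cuts the effective number of spreading directions of the wave front from $n-1$ down to $O(1)$ — which is exactly why the $p=1$ time-growth is $1+|t|$ and not the Euclidean $(1+|t|)^{(n-1)/2}$.

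\emph{The endpoint, and the main obstacle.} At the critical regularity the pieces $2^{-j\alpha_0}\phi_j(\sqrt{\L})\cos(t\sqrt{\L})$ are bounded on $L^p(\d\rho)$ only uniformly in $j$, so a naive dyadic summation loses a logarithm; the Hardy space is what removes it. For an $H^1(S)$-atom $a$ supported in a Calder\'on--Zygmund set $B$ of ``radius'' $r_B$, one estimates $\|\mathcal{W}_t^{(n-1)/2}a\|_{L^1(\d\rho)}$ by splitting it over a controlled enlargement of $B$ — where the $L^2$-bound and Cauchy--Schwarz suffice — and over its complement, where the vanishing moment $\int a\,\d\rho=0$ is played against the regularity of the kernel. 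Since $\mathcal{W}_t^{(n-1)/2}$ is a Fourier integral operator — its Schwartz kernel is singular along a hypersurface, not at a point — this cancellation can be used only after a second dyadic decomposition of each $k_{t,j}$ into $\sim2^{j(n-1)/2}$ pieces adapted to $2^{-j/2}$-caps of the sphere of radius $|t|$, following C\'ordoba--Fefferman and Seeger--Sogge--Stein; summing the resulting $L^1(\d\rho)$-norms over caps and over $j$ yields $\lesssim1+|t|$. \textbf{The main obstacle} is precisely to carry out this second-dyadic/Hardy-space scheme in the Damek--Ricci geometry: one must (i) prove sharp pointwise bounds for each capped piece of $k_{t,j}$ while tracking the modular weights and the effect of non-unimodularity through every region and scale, (ii) control how the Calder\'on--Zygmund sets of $S$ meet the curved light cones, especially for atoms with $r_B\ge1$, and (iii) extract the \emph{sharp} power of $|t|$, i.e.\ that the light-cone part does not exceed $1+|t|$. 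The bounded-frequency ($j=0$) part of both operators presents no endpoint difficulty and is handled by direct estimates on its convolution kernel via the Jacobi-function asymptotics, giving $L^1(\d\rho)\!\to\!L^1(\d\rho)$ bounds with norm $\lesssim1+|t|$ (as in \cite{APV15,MuVa}); and the extra factor $\L^{-1/2}$ in the sine operator only improves matters, being absorbed into the amplitude at high frequencies.

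\emph{Sharpness.} That $\alpha_0$ cannot be lowered is seen by testing the estimate at $t=1$ on frequency-localized data: for $f$ whose spherical transform is a bump at spectral scale $2^j$ — so that $f$ is, in a local chart, a bump of width $2^{-j}$ — one has $\|(\Id+\L)^{\alpha_0/2}f\|_p\asymp2^{j\alpha_0}\|f\|_p$, while energy conservation forces $\cos(\sqrt{\L})f$ to concentrate in a shell of width $2^{-j}$ about the unit sphere, with amplitude making $\|\cos(\sqrt{\L})f\|_p\gtrsim2^{j(n-1)|1/p-1/2|}\|f\|_p$; letting $j\to\infty$ forces $\alpha_0\ge(n-1)|1/p-1/2|$. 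The case of $\alpha_1$ (and of $p>2$ by duality) is analogous, uses only the local part of the propagator, and is essentially the example already recorded in \cite{MuTh,MuVa}.
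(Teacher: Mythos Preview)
Your reduction is right: the paper also proves the $\fh^1(S)\to L^1(\d\rho)$ endpoint (this is Theorem~\ref{thm:main2}) and then interpolates with $L^2$ via Lemma~\ref{lem2.7}. Two remarks on the framework, though. First, the paper works with the \emph{local} Hardy space $\fh^1(S)$ of Goldberg type (Section~\ref{h1}), not the global Hebisch--Steger space; the interpolation identity you invoke is available for $\fh^1(S)$ by \cite{MV,CwJa}, and you should check it for your choice of space. Second, the $|t|\le1$ case is not quite ``a fortiori'': the paper treats it separately with a Bessel-function expansion of $\varphi_\lambda$ (Lemma~\ref{lem:spherical short time}) in place of the Harish--Chandra expansion.

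The real divergence is at the endpoint technique. You propose the Seeger--Sogge--Stein route --- a second dyadic decomposition of each frequency piece into $\sim 2^{j(n-1)/2}$ angular caps, then playing atom cancellation against each cap --- and you correctly identify the obstacles (pointwise bounds on capped pieces with modular weights, interaction of Calder\'on--Zygmund sets with curved light cones, sharp $|t|$-dependence). But you do not overcome them; the proposal stops at naming them. On a Damek--Ricci space there is no obvious cap decomposition of the sphere compatible with the group structure and the modular function, and the heuristic that ``the effective number of spreading directions drops to $O(1)$'' is not turned into a usable estimate. As written this is a plan, not a proof, and the hard step is missing.

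The paper bypasses all of this with a different idea, going back to Peral and Tao: rather than decomposing the wave kernel, it inverts the asymptotic expansion of the spherical function. From Lemmas~\ref{lem:spherical long time} and~\ref{lem:spherical short time} one writes, on the high-frequency region,
\[
\cos(\lambda t)=c_1\,A(t)^{1/2}\lambda^{(n-1)/2}\,\varphi_\lambda(t)+c_2\,A(t)^{1/2}\lambda^{(n-3)/2}\,\varphi'_\lambda(t)+(\text{subcritical remainder}),
\]
so that, after multiplying by $m\in\cS^{-(n-1)/2}$, the principal parts of $m(\sqrt{\L})\cos(t\sqrt{\L})$ factor as $\big(A(t)^{1/2}\varphi_{\sqrt{\L}}(t)\big)\circ m_1(\sqrt{\L})$ and $\big(A(t)^{1/2}\partial_t\varphi_{\sqrt{\L}}(t)\big)\circ m_2(\sqrt{\L})$ with $m_1\in\cS^0$, $m_2\in\cS^{-1}$. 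By the explicit formula \eqref{e4.1}, $\varphi_{\sqrt{\L}}(t)$ is the $\delta^{-1/2}$-weighted spherical mean, whose $L^1\to L^1$ norm is controlled by Lemma~\ref{lem2.1}; the factors $m_1(\sqrt{\L})$ and $|\nabla m_2(\sqrt{\L})\cdot|_\g$ are $\fh^1\to L^1$ bounded by the Mihlin-type Lemma~\ref{lem2.6}. The subcritical remainder falls under the earlier M\"uller--Vallarino estimates. No cap decomposition, no FIO microlocalization, and the sharp time growth $1+|t|$ drops out of Lemma~\ref{lem2.1} directly.
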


Let us briefly comment on Theorem \ref{thm:main}. This endpoint result represents a strengthening of Theorem \ref{thm:MullerVallarino}. In fact, we now know that estimate \eqref{est:mainthm} holds if and only if $\alpha_0 \ge (n-1)\left|1/p-1/2\right|$ and $\alpha_1 \ge (n-1)\left|1/p-1/2\right|-1$, which corresponds to the analogue of the related results in the Euclidean setting, see \cite{Mi,Pe}.

However, as pointed out by Müller and Thiele \cite[p. 147]{MuTh}, the exponent $2|1/p - 1/2|$ of $(1 + |t|)$ in estimate \eqref{est:mainthm} is independent of the topological dimension $n$. This contrasts with the corresponding estimates of Miyachi \cite{Mi} and Peral \cite{Pe} in the Euclidean case, where the exponent is $(n - 1)|1/p - 1/2|$. We observe here that the corresponding exponent could be $(\nu - 1)|1/p - 1/2|$, where $\nu$ denotes the pseudo-dimension (or dimension at infinity), and $\nu=3$ when $S$ is a Damek–Ricci space. This natural conjecture is motivated by previously known results on dispersive estimates in similar settings, such as real hyperbolic spaces, Damek–Ricci spaces, and general noncompact symmetric spaces, where the long-time dispersive decay rate depends on the dimension at infinity rather than on the topological dimension, see, for instance, \cite{AP14,APV15,AZ24}. Establishing estimate \eqref{est:mainthm} under sharp regularity conditions for higher rank groups would therefore be an interesting direction for further investigation on this point.

It is noteworthy that Ionescu proved in \cite{Ion00} a sharp result analogous to estimate \eqref{est:mainthm} for the wave equation associated with the Laplace--Beltrami operator on noncompact symmetric spaces of rank one. In that case, the time growth on the right-hand side of the estimate is exponential rather than polynomial as in our setting. See also \cite[Remark 3]{MuVa}. Although Damek--Ricci spaces also have exponential volume growth, we still obtain polynomial growth with respect to $|t|$ in estimate \eqref{est:mainthm}. This reflects the difference between the Laplace--Beltrami operator and the distinguished Laplacian.

Theorem \ref{thm:main} follows from an interpolation argument based on the following improved $L^1$-norm estimate. We denote by $\fh^1(S)$ the local Hardy space on $S$, and $\|\cdot\|_{\fh^1(S)}$ the corresponding norm (see Subsection \ref{h1} for more details).

\begin{theorem}\label{thm:main2}
Following the notation of Theorem \ref{thm:MullerVallarino}, there exists $C>0$ such that the solution to the Cauchy problem \eqref{eq: wave} satisfies, for all $t\in\mathbb{R}^*$,
\begin{align*} 
\|u(t,\cdot)\|_{L^1(S,\d\rho)}\leq C\,(1+|t|)\, \big(\|(\Id+\L)^{\frac{\alpha_0}{2}}f\|_{\fh^1(S)}+\|(\Id+\L)^{{\frac{\alpha_1}{2}} }g\|_{\fh^1(S)}\big).
\end{align*}
if and only if 
\begin{align*}
\alpha_0\geq {n-1\over 2}\quad \text{and}\quad \alpha_1\geq {n-1\over 2}-1.
\end{align*}
\end{theorem}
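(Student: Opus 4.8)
The plan is to decompose the wave propagators $\cos(t\sqrt{\L})$ and $\frac{\sin(t\sqrt{\L})}{\sqrt{\L}}$ spectrally and reduce the $\fh^1(S)\to L^1(S,\d\rho)$ bound to a careful analysis of the convolution kernels, separating the local regime $|t|\le 1$ (together with the high-frequency behaviour for all $t$) from the genuinely long-time regime $|t|\ge 1$. First I would write the solution as $u(t,\cdot)=\cos(t\sqrt{\L})f + \frac{\sin(t\sqrt{\L})}{\sqrt{\L}}g$ and, via the spectral theorem and the known $\L^2$-spectrum of the distinguished Laplacian on $S$ (which is $[0,\infty)$, unlike the Laplace--Beltrami case where a gap appears), express each operator through an oscillatory integral $\int \widehat{\chi}(\lambda)\,\e^{\pm\i t\lambda}\,\psi_j(\lambda)\,(\Id+\L)^{-\alpha/2}\,\d E_\lambda$ after a Littlewood--Paley decomposition $\sum_j\psi_j(\sqrt{\L})$. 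The operator $(\Id+\L)^{-\alpha_0/2}\cos(t\sqrt{\L})$ at the threshold $\alpha_0=\tfrac{n-1}{2}$ should be recognized as a Fourier integral operator of order $-\tfrac{n-1}{2}$ on the local part, so the classical Miyachi--Peral local theory — transplanted to $S$ via the $\fh^1$ atomic decomposition and the fact that $S$ is a space of homogeneous type for $\d\rho$ on balls of bounded radius — handles $|t|\le 1$ and high frequencies uniformly, contributing the $(1+|t|)$-independent part of the bound.

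For the long-time part $|t|\ge 1$ and low-to-medium frequencies, I would follow the Damek--Ricci harmonic analysis machinery: use the spherical Fourier transform, the explicit Plancherel density $|\bc(\lambda)|^{-2}$, and the known pointwise estimates for the spherical functions $\varphi_\lambda$ (and for the wave kernel) from \cite{APV15,MuVa}, exploiting the exponential volume growth only through the factor $\e^{-\rho r}$ coming from the ground spherical function, which precisely compensates the exponential growth of $\d\rho$ restricted to the relevant region. The kernel $K_t$ of $(\Id+\L)^{-\alpha_1/2}\frac{\sin(t\sqrt{\L})}{\sqrt{\L}}$ at $\alpha_1=\tfrac{n-1}{2}-1$ should be controlled by splitting into the region $r=d(x,o)\lesssim |t|$, where stationary/non-stationary phase in $\lambda$ gives the decay, and $r\gtrsim |t|$, where finite propagation speed forces the kernel to be essentially zero; integrating $|K_t(x)|$ against $\d\rho$ over the ball of radius $\sim|t|$ produces exactly the linear factor $(1+|t|)$. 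The Hardy space $\fh^1$ (rather than $L^1$) is essential here because at the endpoint the local singularity of the kernel is of Calderón--Zygmund type of the critical order, so one tests on atoms and uses cancellation; the passage from $\fh^1(S)$ atoms to the global estimate uses the local doubling property and the $L^2$ bound from the spectral calculus on each Littlewood--Paley piece, summed in $j$ with the gain afforded by the $(\Id+\L)^{-\alpha/2}$ smoothing at the exact critical exponent.

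The main obstacle I anticipate is the endpoint summability in the Littlewood--Paley decomposition: at $\alpha_0=\tfrac{n-1}{2}$ the individual dyadic pieces are only borderline summable, so one cannot afford any loss, and the argument must extract a logarithmic-type gain either from the $\fh^1$ cancellation tested against the kernel's Hölder modulus of continuity at scale $2^{-j}$, or from a more delicate $L^2$-weighted estimate on annuli (à la Fefferman--Stein / Seeger--Sogge--Stein) adapted to the nonunimodular group $S$. A secondary difficulty is maintaining uniformity in $t$ when gluing the local and global pieces: the $\fh^1(S)$ norm is intrinsically local, so the medium-frequency transition region (frequencies $\sim 1$, which are simultaneously "not high" and "not low") must be treated by hand, typically by absorbing it into the long-time kernel estimate using the boundedness of the wave operator at a fixed frequency on $L^1(\d\rho)$ with an $\e^{-\rho r}$-type kernel decay. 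Finally, the sharpness ("only if") direction requires exhibiting a family of approximate atoms (concentrated bumps, or truncated spherical functions) for which the lower bound $\alpha_0\ge\tfrac{n-1}{2}$, $\alpha_1\ge\tfrac{n-1}{2}-1$ is forced — these should be modeled on the Euclidean Knapp-type examples of Miyachi and Peral, localized to a ball of radius $\sim 1$ so that the group structure is irrelevant, which is why the threshold matches the Euclidean $(n-1)/2$.
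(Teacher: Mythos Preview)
Your proposal has a genuine gap at precisely the point you yourself flag as the ``main obstacle'': the endpoint summability of the Littlewood--Paley pieces. The scheme you outline---dyadic decomposition, kernel/stationary-phase estimates on each piece, testing on $\fh^1$-atoms, then summing---is essentially the Müller--Vallarino argument \cite{MuVa}, and that argument is known to yield only $\alpha_0>(n-1)/2$, not $\alpha_0=(n-1)/2$. Your hope to ``extract a logarithmic-type gain'' from atom cancellation or a Seeger--Sogge--Stein weighted estimate is not a concrete plan: at the exact critical order the per-piece gain from H\"older regularity of the kernel is of size $O(1)$, not $o(1)$, so the sum over $j$ diverges. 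An SSS-style second dyadic decomposition has not been carried out on Damek--Ricci spaces and would require substantial new input (a usable FIO parametrix for $\e^{\i t\sqrt{\L}}$ on $S$, which the paper does not have either).

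The paper circumvents the summation problem entirely by a Peral--Tao factorization trick. The key identity is that the asymptotic expansion of the spherical function (Harish--Chandra for $t>1$, Bessel for $t\le 1$; Lemmas~\ref{lem:spherical long time} and~\ref{lem:spherical short time}) can be \emph{inverted} to write, on the high-frequency region,
\[
\cos(\lambda t)=c_1\,A(t)^{1/2}\lambda^{(n-1)/2}\,\varphi_\lambda(t)+c_2\,A(t)^{1/2}\lambda^{(n-3)/2}\,\varphi'_\lambda(t)+\text{(subcritical remainder)}.
\]
Applying $m(\sqrt{\L})$ with $m\in\cS^{-(n-1)/2}$, the two principal terms become $A(t)^{1/2}\varphi_{\sqrt{\L}}(t)\circ m_1(\sqrt{\L})$ and $A(t)^{1/2}\varphi'_{\sqrt{\L}}(t)\circ m_2(\sqrt{\L})$ with $m_1\in\cS^0$, $m_2\in\cS^{-1}$. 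Now $\varphi_{\sqrt{\L}}(t)$ is literally the spherical averaging operator~\eqref{e4.1}, which is bounded $L^1\to L^1$ with norm $\lesssim A(t)^{-1/2}(1+t)$ by the elementary sphere integral in Lemma~\ref{lem2.1}; the $\cS^0$ multiplier is bounded $\fh^1\to L^1$ by standard Mihlin theory (Lemma~\ref{lem2.6}). The $\varphi'$ term is handled similarly after one differentiation under the sphere integral and an application of the $\cS^{-1}$ gradient bound. The subcritical remainder falls to the old Müller--Vallarino $L^1$ estimate. No dyadic summation over frequency is needed for the critical part, which is why the endpoint is reached. Your outline misses this factorization, and without it the argument does not close.
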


The proof of Theorem \ref{thm:main2} is based on a careful study of the corresponding multipliers of the wave propagators. It combines refined spherical analysis with techniques inspired by Peral \cite{Pe} and Tao \cite{Ta}, and the argument relies crucially on the explicit asymptotic expansions of the spherical function and its derivatives. It is well known that extending results from the $ax+b$ group to the full class of Damek–Ricci spaces is far from automatic. The nilpotent part of the group is no longer abelian but an $H$-type group endowed with a more intricate algebraic structure. Moreover, such a generalization provides the first insights into the study of the higher rank situations mentioned above.

Before presenting the detailed proofs of the main theorems in Section \ref{sec3}, we review in Section \ref{sec2} the structure of the Damek–Ricci space $S$ and the analysis on it, where we also recall the definition of the local Hardy space on $S$. Throughout this paper, the notation $A \lesssim B$ between two positive quantities means that there exists a constant $C>0$, independent of all possible variables, such that $A \le CB$. 


\section{Preliminaries}\label{sec2}
In this section, we begin by reviewing the structure of the Damek–Ricci space, followed by a summary of the analysis on it. Most of these materials have already appeared in the literature. See, for instance, \cite{AnDaYa, AnPiVa, CDKR1, CDKR2, D1, D2, DaRi1, DaRi2, MV}. For the reader’s convenience, we recall here some basic facts. In the last part of this section, we review the theory of the local Hardy space on $S$ (see \cite{MuVa} for further details).

\subsection{Damek-Ricci spaces}\label{subsec2.1}
We first define a Heisenberg-type group, and then its solvable extension, the Damek–Ricci space.
\smallskip

\paragraph{\bf Heisenberg-type groups.}
Let $\mathfrak{n}$ be a real Lie algebra equipped with an inner product $\langle \cdot, \cdot \rangle$ and corresponding norm $|\cdot|$. Let $\fv$ and $\fz$ be complementary orthogonal subspaces of $\fn$ such that $[\fn,\fz ]=\{0\}$ and $[\fv,\fv]\subseteq \fz$. Assume that $\fn$ is of \textit{Heisenberg-type} (or \textit{$H$-type} for short), meaning that for every $z$ in $\fz$, the linear map $J_z:\fv\to \fv$ defined by
$$\langle J_zv,v'\rangle\,=\langle z,[v,v']\rangle\quad\text{for all } v, v'\in \fv$$
is orthogonal. The connected and simply connected Lie group $N$ whose Lie algebra is $\mathfrak{n}$ is called an $H$-type group. Via the exponential map, we identify $N$ with $\mathfrak{n}$ and write
\begin{align*}
\begin{array}{ccc}
\fv\times\fz &\to& N\\
(v,z)&\mapsto &\exp(v+z).
\end{array}
\end{align*}
The multiplication in $N$ is thus given by
\begin{align*}
(v,z)(v',z')
=
\left(v+v',z+z'+\frac12 [v,v']\right)
\quad\forall\,v,\,v'\in \fv,\,\,\,
\forall\,z,\,z'\in\fz.
\end{align*}
Although our definitions and subsequent analysis can, with minor modifications, also be applied to the case where $N$ is abelian, since this case has essentially already been treated in \cite{WaYa}, we shall in this paper focus on the case where $N$ is nonabelian. The group $N$ is a two-step nilpotent group with the unimodular Haar measure  $\d v \d z$. We denote by $Q=m_{\fz}+m_{\fv}/2$ the \textit{homogeneous dimension} of $N$, where $m_{\fz}$ and $m_{\fv}$ are dimensions of $\fz$ and $\fv$, respectively. 
\smallskip

\paragraph{\bf Damek--Ricci spaces.} 
Let $H$ be the unit vector generating the one-dimensional subalgebra $\fa$, and define its adjoint action on $\fn = \fv \oplus \fz$ by
\begin{align*}
\operatorname{ad}(H)v = \frac{1}{2}v \quad \forall\,v \in \fv
\qquad \text{and} \qquad
\operatorname{ad}(H)z = z \quad \forall\,z \in \fz.
\end{align*}
We extend the inner product on $\fn$ to the direct sum $\fs = \fn\oplus\fa$ by requiring $\fa$ to be orthogonal to $\fn$. Then $\fs$ is a solvable Lie algebra, and the corresponding simply connected Lie group $S=N \rtimes A$ is called the \textit{Damek–Ricci space}, where $A= \mathbb{R}_+$ acts on $N$ by the dilation
\begin{align*}
\delta_a(v,z)&=(a^{1\over2}v,az)
\quad\forall\,(v,z)\in N,\,\,\,\forall\,a\in\R_+.
\end{align*}
The product in $S$ is then defined by the rule
\begin{align*}
(v,z,a)(v',z',a')
=
\left(v+a^{1\over2}v',z+az'+\frac12 a^{1\over2}[v,v'],aa'\right)
\qquad\forall\,(v,z,a),\,(v',z',a')\in S.
\end{align*}
We denote by $n=m_{\fv}+m_{\fz}+1$ the dimension of $S$. Recall that the group $S$ is nonunimodular, the right and left Haar measures on $S$ are given, respectively, by
\begin{align*}
\d\rho(v,z,a)=a^{-1}\d v\d z\d a
\qquad\text{and}\qquad
\d\lambda(v,z,a)=a^{-(Q+1)}\d v\d z\d a
\end{align*}
The modular function is thus given by $\delta(v,z,a)=a^{-Q}$. Hereafter, all function spaces on $S$ are defined with respect to the right Haar measure $\d\rho$, unless specified otherwise.

We equip $S$ with the left-invariant Riemannian metric $d$ induced by the inner product
\begin{align*}
\langle (v,z,a),(v',z',a') \rangle
=
\langle v,v' \rangle + \langle z,z' \rangle + aa'
\end{align*}
on $\fs$. It is known that, for all $(v,z,a)\in S$,
\begin{align}\label{e2.1}
\cosh^2\!\left({d((v,z,a),e)\over 2}\right)&=\left({a^{1/2}+a^{-1/2}\over 2}+{a^{-1/2}|v|^2\over 8}\right)^2+{a^{-1}|z|^2\over 4}\\
&={\left(1+a+{|v|^2\over 4}\right)^2+|z|^2\over 4a},
\end{align}
where $e$ denotes the identity of the group $S$, see, for instance, \cite[(2.18)]{AnDaYa}. We shall later use the notation $R(x)$ to denote the distance between the point $x\in S$ and the identity $e$. Let $B_r$ denote the ball in $S$ centered at $e$ with radius $r$. We know from \cite[(1.18)]{AnDaYa} that 
\begin{align}\nonumber
\rho(B_r)\sim\begin{cases} 
r^n&\text{if }0<r\leq 1,\\
\e^{Qr}&\text{if }r>1,
\end{cases}
\end{align}
which implies that the group $S$, equipped with the right Haar measure $\d\rho$, is of exponential growth.

\subsection{Analysis on Damek--Ricci spaces}\label{subsec2.2}
In this section, we review the essential analytical tools on the Damek–Ricci space $S$. We first introduce the spherical harmonic analysis on $S$, which serves as a key ingredient in our arguments. Then we recall the polar coordinates and differential operators on $S$. 
\smallskip 

\paragraph{\bf Spherical harmonic analysis.}
A radial function on $S$ is a function that depends only on the distance from the identity. According to \cite[(1.16)]{AnDaYa}, if $\kappa$ is radial, then
\begin{equation}\label{intsin}
\int_S\kappa\,\d \lambda=\int_0^{\infty}\kappa(r)\,A(r)\,\d r,
\end{equation}
where 
\begin{align}\label{def:VolumeA}
A(r)&=2^{m_{ \fv}+m_{\fz}}
\sinh^{m_{ \fv}+m_{\fz}}\! \frac{r}{2}\,
\cosh^{m_{\fz}}\!\frac{r}{2}
\qquad\forall\,r\ge0.
\end{align}
One easily checks that
\begin{align}\label{pesoA}
A(r)&\lesssim \left(\frac{r}{1+r}\right)^{n-1}\e^{Qr}
\qquad\forall\,r\ge0.
\end{align}

For a suitable radial function $\kappa$, one can define the spherical Fourier transform by 
\begin{align}\label{e2.9}
\F(\kappa)(\lambda)
=\nu_n\int_0^\infty \kappa(r)\, \varphi_\lambda(r)\,A(r)\,\d r,
\end{align}
where $\nu_n={2\uppi^{n/2} \Gamma(n/2)^{-1}}$ is the surface area of the unit sphere $\SS^{n-1}$ and $\varphi_{\lambda}$ denotes the \textit{elementary spherical function}, which can be expressed as hypergeometric functions via the formula
\begin{align}\label{e2.10}
\varphi_\lambda(r)={}_2F_1\!\left({Q\over 2}-\i\lambda,{Q\over 2}+\i\lambda;{n\over 2};-\sinh^2{r\over 2}\right),
\end{align}
according to \cite[(3)]{As1}, with ${}_2F_1(a,b;c;z)$ being the classical hypergeometric function given by
\begin{align}\label{ehgf}
{}_2F_1(a,b;c;z)={\Gamma(c)\over \Gamma(b)\,\Gamma(c-b)}\int_0^1t^{b-1}(1-t)^{c-b-1}(1-tz)^{-a}\,\d t.
\end{align}
The spherical function satisfies the estimate
\begin{align}\label{est:spherical}
|\varphi_{\lambda}(r)|
\le \varphi_{0}(r)
\lesssim (1+r)\, \e^{-\frac{Q}{2}r}
\qquad\forall\,\lambda\in\mathbb{C},\,\forall\,r\ge0.
\end{align}
It also satisfies an explicit asymptotic expansion, known as the Harish–Chandra expansion, which plays a crucial role in our argument. A detailed explanation is provided in Section \ref{sec3}.

Using the ground spherical function, we obtain the following integration formula, which was already mentioned in \cite[Lemma 2.1]{MuVa}.
\begin{lemma}\label{lem2.2}
For every radial function $\kappa\in C^\infty_c(S)$,
\begin{align*}
\int_S\delta(x)^{1\over2} \kappa(x)\,\d\rho(x)=\int_0^\infty\varphi_0(r)\,\kappa(r)\,A(r)\,\d r=\int_0^\infty \kappa(r)\, V(r)\,\d r,
\end{align*}
where
\begin{align*}
V(R)\lesssim
\begin{cases}
R^{n-1}&\text{if }0<R\leq 1,\\
R\,\e^{{Q\over 2}R}&\text{if }R\geq 1.
\end{cases}
\end{align*}
\end{lemma}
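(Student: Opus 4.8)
The plan is to separate the statement into the two integral identities and the pointwise bound on $V$, the latter being the only substantive point. For the first identity I would start from the two Haar measures recorded above: since $\d\lambda(x)=\delta(x)\,\d\rho(x)$, we have $\delta(x)^{1/2}\,\d\rho(x)=\delta(x)^{-1/2}\,\d\lambda(x)$, so that
\begin{align*}
\int_S\delta(x)^{1/2}\kappa(x)\,\d\rho(x)=\int_S\delta(x)^{-1/2}\kappa(x)\,\d\lambda(x).
\end{align*}
Because $\kappa$ is radial, I would then pass to geodesic polar coordinates around $e$, writing $\d\lambda$ as $\d\sigma_r\,\d r$ with $\sigma_r$ the measure induced on the geodesic sphere $\{R(x)=r\}$; formula \eqref{intsin} says precisely that $\sigma_r$ has total mass $A(r)$, and the standard spherical mean-value property of the ground spherical function on $S$ (see \cite{AnDaYa,MuVa}) gives $\int_{\{R(x)=r\}}\delta(x)^{-1/2}\,\d\sigma_r=\varphi_0(r)\,A(r)$. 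Integrating $\kappa$ then yields $\int_0^\infty\varphi_0(r)\kappa(r)A(r)\,\d r$, and the second equality is just the definition $V(r):=\varphi_0(r)A(r)$.

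For the remaining size estimate I would simply multiply the two bounds already available in the excerpt: by \eqref{est:spherical}, $\varphi_0(r)\lesssim(1+r)\,\e^{-\frac{Q}{2}r}$, and by \eqref{pesoA}, $A(r)\lesssim\big(\tfrac{r}{1+r}\big)^{n-1}\e^{Qr}$, hence
\begin{align*}
V(r)=\varphi_0(r)\,A(r)\lesssim(1+r)\Big(\tfrac{r}{1+r}\Big)^{n-1}\e^{\frac{Q}{2}r}\qquad\forall\,r\ge0.
\end{align*}
Splitting into cases, for $0<r\le1$ one has $1+r\sim1$, $\big(\tfrac{r}{1+r}\big)^{n-1}\sim r^{n-1}$ and $\e^{Qr/2}\sim1$, so $V(r)\lesssim r^{n-1}$; for $r\ge1$ one has $1+r\sim r$ and $\big(\tfrac{r}{1+r}\big)^{n-1}\le1$, so $V(r)\lesssim r\,\e^{\frac{Q}{2}r}$, which is exactly the asserted bound.

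I do not expect a genuine obstacle here: the only point requiring care is bookkeeping the normalization in the polar decomposition of $\d\lambda$ and checking that it is $\delta^{-1/2}$ — not $\delta^{1/2}$ — that appears after converting the $\d\rho$-integral, so that $V$ comes out positive with the stated growth. All of this is classical harmonic analysis on Damek--Ricci spaces and is exactly the content invoked in \cite[Lemma 2.1]{MuVa}; once the first identity is in hand, the bound on $V$ is immediate from \eqref{pesoA} and \eqref{est:spherical}.
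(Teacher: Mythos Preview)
Your proposal is correct and follows exactly the route the paper implicitly relies on: the paper does not give a proof but simply cites \cite[Lemma 2.1]{MuVa}, and your argument is precisely the standard one---convert $\delta^{1/2}\,\d\rho$ to $\delta^{-1/2}\,\d\lambda$, pass to polar coordinates via \eqref{epolint}, recognize the angular integral as $\nu_n\varphi_0(r)$ from \eqref{fml:PhiDelta} at $\lambda=0$, and then bound $V(r)=\varphi_0(r)A(r)$ by multiplying \eqref{est:spherical} and \eqref{pesoA}. There is nothing to add; the only caveat is the usual $\nu_n$ normalization discrepancy between \eqref{intsin} and \eqref{epolint}, which is a cosmetic issue already present in the paper and does not affect the estimate.
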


For suitable radial functions $\kappa$ on $S$, we also have the inversion formula of the above Fourier transform:
\begin{align*}
\kappa(r)=
{2^{m_\fz-2}\Gamma({n\over 2})\over \uppi^{{n\over 2}+1}}\int_0^\infty \F (\kappa)(\lambda)\,\varphi_\lambda(r)\,|\bc(\lambda)|^{-2}\,\d\lambda,
\end{align*}
where $\bc$ denotes the Harish-Chandra $\bc$-function given by
\begin{align}\label{e2.11}
\bc(\lambda)={2^{Q-\i\,2\lambda}\Gamma(\i\,2\lambda)\over\Gamma({Q\over2}+\i\lambda)}{\Gamma({n\over2})\over \Gamma({m_\fv\over4}+{1\over2}+\i\lambda)},
\end{align}
see \cite[p. 648]{AnDaYa}. It is well known that the $\bc$-function satisfies
\begin{align}\label{est:cfunction}
\partial_{\lambda}^{k}\bc(\lambda)^{\pm 1}
=
\text{O}(|\lambda|^{\mp \frac{n-1}{2}-k})
\qquad\forall\,k\in\mathbb{N},\,\forall\,|\lambda|\ge1.
\end{align}
In fact, we have the following asymptotic expansion of the $\bc$-function as $|\lambda| \to \infty$:
\begin{align}\label{expansion:cfunction}
\bc(\lambda)
=2^{Q-1} \uppi^{-\frac12} \Gamma\left(\frac{n}{2}\right)
\left((\i\lambda)^{-\frac{n-1}{2}}+O(\lambda^{-\frac{n+1}{2}})\right).
\end{align}
\smallskip

\paragraph{\bf Polar coordinates.}
According to \cite[Section 4]{DaRi2}, the group $S$ can be identified with the open unit ball $\BB:=\{(V,Z,b)\in \fv\times \fz\times\R:|V|^2+|Z|^2+b^2<1\}$ in $\fs$ via the map $F:S\to\BB$ defined by
\begin{align*}
F(v,z,a) :={1\over \left(1+a+{|v|^2\over 4}\right)^2+|z|^2}\left(\left(1+a+{|v|^2\over 4}-J_z\right)\!v,2z,-1+\left(a+{|v|^2\over 4}\right)^2+|z|^2\right).
\end{align*}
The inverse map $F^{-1}:\BB\to S$ is given by
\begin{align}\label{e2.3}
F^{-1}(V,Z,b)={1\over (1-b)^2+|Z|^2}\big(2(1-b+J_Z)V,2Z,1-r^2\big),
\end{align}
where $r^2=|V|^2+|Z|^2+b^2$. It follows from \eqref{e2.1} that
\begin{align*}
|F(v,z,a)|^2=1-{4a\over \left(1+a+{|v|^2\over 4}\right)^2+|z|^2}=\tanh^2{R(v,z,a)\over 2},
\end{align*}
where $R(v,z,a)$ denotes the distance between the point $(v,z,a)\in S$ and the identity. The polar coordinates on $S$ are induced by those on $\BB$ via the map $F^{-1}$ and are therefore given by
\begin{align}\label{epol}
x(R,\omega)=F^{-1}\!\left(\left(\tanh{R\over 2}\right) \omega\right),\quad (R,\omega)\in \mathbb{R}_+\times\SS^{n-1}.
\end{align}

Under the polar coordinates, one has the following integration formula:
\begin{align}\label{epolint}
\int_Sf(x)\,\d\lambda(x)=\int_0^\infty\left(\int_{\SS^{n-1}}f(x(r,\omega)\,\d\omega\right)\,A(r)\,\d r,
\end{align}
{\red where $A(r)$ is the volume element given by \eqref{def:VolumeA}. }The spherical function given by \eqref{e2.10} is of the form
\begin{align}\label{fml:PhiDelta}
\varphi_\lambda (r)={1\over \nu_n}\int_{\mathbb{S}^{n-1}}\delta(x(r,\omega))^{\i\frac{\lambda}{Q}-\frac12}\,\d\omega.
\end{align}

The next lemma is vital to the proof of our main result.
\begin{lemma}\label{lem2.1}
Let $\ell=0,1$. Then, we have, 
\begin{align*}
\int_{\SS^{n-1}}\left|\partial_R^\ell\!\left[\delta(x(R,\omega))^{-{1\over 2}}\right]\right|\d \omega\lesssim
\begin{cases}
1&\text{if }0<R\leq 1,\\
R\,\e^{-{Q\over 2}R}&\text{if }R>1.
\end{cases}
\end{align*}
\end{lemma}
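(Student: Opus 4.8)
The plan is to reduce both cases to the known estimate \eqref{est:spherical} for the ground spherical function $\varphi_0$ via the pointwise representation \eqref{fml:PhiDelta}. The case $\ell=0$ is immediate: since $\delta(x(R,\omega))^{-1/2}>0$, the integral equals $\int_{\SS^{n-1}}\delta(x(R,\omega))^{-1/2}\,\d\omega=\nu_n\varphi_0(R)$ by \eqref{fml:PhiDelta} with $\lambda=0$, and \eqref{est:spherical} gives $\nu_n\varphi_0(R)\lesssim(1+R)\,\e^{-QR/2}$, which is $\lesssim1$ for $0<R\le1$ and $\lesssim R\,\e^{-QR/2}$ for $R>1$.

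For $\ell=1$ I would first make the integrand completely explicit. Writing $\omega=(V_0,Z_0,b_0)\in\SS^{n-1}\subset\fv\times\fz\times\R$ and $t=\tanh(R/2)\in(0,1)$, the polar-coordinate formula \eqref{epol} together with \eqref{e2.3} shows that the $A$-coordinate of $x(R,\omega)=F^{-1}(t\,\omega)$ is $a(R,\omega)=(1-t^2)/D$, where $D=D(R,\omega):=(1-tb_0)^2+t^2|Z_0|^2>0$; hence $\delta(x(R,\omega))^{-1/2}=a(R,\omega)^{Q/2}$. Since $\d t/\d R=\tfrac12(1-t^2)$, a direct differentiation (in which the factor $1-t^2$ cancels out) yields
\[
\partial_R\!\left[\delta(x(R,\omega))^{-1/2}\right]=\frac{Q}{2}\,a(R,\omega)^{Q/2}\,\frac{N}{D},
\qquad N=N(R,\omega):=(b_0-t)(1-tb_0)-t\,|Z_0|^2 .
\]

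The key step is then the \emph{pointwise} inequality $|N|\le 3D$, from which $\bigl|\partial_R[\delta(x(R,\omega))^{-1/2}]\bigr|\le\tfrac{3Q}{2}\,\delta(x(R,\omega))^{-1/2}$, and then integration over $\SS^{n-1}$ combined with the case $\ell=0$ finishes the proof. I would establish $|N|\le 3D$ from two elementary observations: (i) $|b_0-t|^2-(1-tb_0)^2=-(1-b_0^2)(1-t^2)\le0$, so $|b_0-t|\le|1-tb_0|\le\sqrt{D}$ and hence $|b_0-t|\,|1-tb_0|\le D$; and (ii) $t\,|Z_0|^2\le 2D$, which follows by cases---if $t\ge\tfrac12$ then $D\ge t^2|Z_0|^2\ge\tfrac12\,t|Z_0|^2$, whereas if $t<\tfrac12$ then $1-tb_0>\tfrac12$ forces $D\ge\tfrac14$ while $t|Z_0|^2<\tfrac12$. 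Adding the two, $|N|\le|b_0-t|\,|1-tb_0|+t|Z_0|^2\le3D$.

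The one genuinely delicate point is this last inequality. When $N$ is abelian one has $\fz=\{0\}$, so $D$ reduces to $(1-tb_0)^2$ and the bound $|N|/D=|b_0-t|/(1-tb_0)\le1$ is trivial; in the nonabelian setting the extra term $t|Z_0|^2$ must be absorbed, and one must preserve the exact cancellation exhibited by the factorization $N=(b_0-t)(1-tb_0)-t|Z_0|^2$: near the ``north pole'' $\omega\approx(0,0,1)$ with $R\to\infty$ both $N$ and $D$ degenerate, and a crude estimate such as $|N|\lesssim\sqrt D$ would only give $\int_{\SS^{n-1}}|\partial_R[\delta(x(R,\omega))^{-1/2}]|\,\d\omega\lesssim\e^{R/2}\varphi_0(R)$ as $R\to\infty$, which is exponentially too large. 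Everything else---the explicit form of $a(R,\omega)$ and the differentiation---is routine.
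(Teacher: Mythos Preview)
Your proof is correct and genuinely more elementary than the paper's. For $\ell=0$ both arguments are identical. For $\ell=1$ the paper applies the product rule to $\delta(x(R,\omega))^{-1/2}=(1-t^{2})^{Q/2}D^{-Q/2}$, bounding the derivative crudely by $\tanh(R/2)\,\delta^{-1/2}$ plus a term of size $\cosh^{-Q-2}(R/2)\,D^{-(Q+1)/2}$; the integral of the second piece over $\SS^{n-1}$ is then handled by the change of variables of \cite[Theorem 5.12]{DaRi2}, transferring it to an integral over the nilpotent group and estimating that by $Ce^{R}$ for $R>1$. You instead keep the two pieces together, writing the derivative as $\tfrac{Q}{2}\,\delta^{-1/2}\cdot N/D$ with $N=(b_{0}-t)(1-tb_{0})-t|Z_{0}|^{2}$, and observe the pointwise inequality $|N|\le 3D$; this reduces the $\ell=1$ case immediately to the $\ell=0$ case with no further integration or change of variables. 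Your approach exploits a cancellation that the paper's cruder splitting misses, and it is strictly simpler; the paper's route has the virtue of separating the analysis into a ``radial'' piece and a ``spherical'' piece, which may be more robust if one later needs higher-order derivatives, but for the present lemma your argument is preferable. One minor comment: in your final paragraph the symbol $N$ is used both for the numerator $N(R,\omega)$ and for the Heisenberg-type group---you may want to disambiguate.
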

\begin{proof}
It follows from \eqref{fml:PhiDelta} and \eqref{est:spherical} that
\begin{align}\label{e2.6}
\int_{\SS^{n-1}}\delta(x(R,\omega))^{-{1\over 2}}\,\d\omega=\varphi_0(R)\lesssim\,(1+R)\,\e^{-\frac{Q}{2}R},
\end{align}
which proves the lemma for $\ell=0$. 

Now, let us show that 
\begin{align}\label{e2.7}
\mathbf{I}(R):=\int_{\SS^{n-1}}\left|\partial_R\!\left[\delta(x(R,\omega))^{-{1\over 2}}\right]\right|\d \omega\lesssim\,(1+R)\,\e^{-\frac{Q}{2}R}.
\end{align}
In view of \eqref{e2.3}, if $\omega=(V,Z,b)\in\SS^{n-1}$, then
\begin{align*}
\left|\partial_R\!\left[\delta(x(R,\omega))^{-{1\over 2}}\right]\right|&=\left|\partial_R\!\left[\left({1-\tanh^2{R\over 2}\over \left(1-b\tanh{R\over 2}\right)^2+|Z|^2\tanh^2{R\over 2}}\right)^{Q\over2}\right]\right|\\
&\lesssim \left(\tanh {R\over2}\right)\delta(x(R,\omega))^{-{1\over 2}}\vphantom{\left.+{\cosh^{-Q-2}{R\over 2}\over\left(\left(1-b\tanh{R\over 2}\right)^2+|Z|^2\tanh^2{R\over 2}\right)^{Q/2+1/2}}\right)}+{\cosh^{-Q-2}{R\over 2}\over\left(\left(1-b\tanh{R\over 2}\right)^2+|Z|^2\tanh^2{R\over 2}\right)^{Q+1\over2}}.
\end{align*}
Hence
\begin{align*}
\mathbf{I}(R)&\lesssim \underbrace{\vphantom{\int_{\SS^{n-1}}{\cosh^{-Q-2}{R\over 2}\over\left(\left(1-b\tanh{R\over 2}\right)^2+|Z|^2\tanh^2{R\over 2}\right)^{Q+1\over2}}\,\d\omega}\int_{\SS^{n-1}}\delta(x(R,\omega))^{-{1\over 2}}\,\d\omega}_{\mathbf{I}_1(R)}+\underbrace{\int_{\SS^{n-1}}{\cosh^{-Q-2}{R\over 2}\over\left(\left(1-b\tanh{R\over 2}\right)^2+|Z|^2\tanh^2{R\over 2}\right)^{Q+1\over2}}\,\d\omega}_{\mathbf{I}_2(R)}.
\end{align*}
Clearly by \eqref{e2.6},
\begin{align*}
\mathbf{I}_1(R)\lesssim (1+R)\,\e^{-{Q\over 2}R}.
\end{align*}
Therefore it remains to show that $\mathbf{I}_2(R)$ satisfies the same estimate as above. If $0<R\leq 1$, the integrand inside $\mathbf{I}_2(R)$ is uniformly bounded. Otherwise, by the same change-of-variable argument in the proof of \cite[Theorem 5.12]{DaRi2}, we have
\begin{align*}
\mathbf{I}_2(R)&=\cosh^{-Q-2}{R\over 2}\int_N\left[\left(1+\tanh{R\over 2}+{|v|^2\over 2}+\left(1-\tanh {R\over 2}\right)\!\left({|v|^4\over 16}+|z|^2\right)\right)^2\right.\\
&\qquad\qquad\qquad\qquad\qquad +\left.\vphantom{\left(1+\tanh{R\over 2}+{|v|^2\over 2}+\left(1-\tanh {R\over 2}\right)\!\left({|v|^4\over 16}+|z|^2\right)\right)^2}4|z|^2\tanh^2{R\over 2}\right]^{-{Q+1\over2}}\left(\left(1+{|v|^2\over 4}\right)^2+|z|^2\right)\d v\d z\\
&\lesssim\,\e^{-{Q+2\over 2}R}\int_N\left(1+|v|^2+|z|+\e^{-2R}(|v|^4+|z|^2)\right)^{-(Q+1)}(1+|v|^4+|z|^2)\,\d v\d z.
\end{align*}
Since there exists a constant $C>0$ such that the last integral is bounded by $C\e^R$ for all $R>1$, the proof is therefore completed.
\end{proof}

\paragraph{\bf Distinguished Laplacian and spectral multipliers}
Let $\{X_0,\dots,X_{n-1}\}$ be an orthonormal basis of the Lie algebra $\fs$, where $X_0=H$, the vectors $X_1,\dots,X_{m_{\fv}}$ form an orthonormal basis of $\fv$ and $X_{m_{\fv}+1},\dots,X_{n-1}$ form an orthonormal basis of $\fz$. As usual, we shall identify an element $X\in\fs$ with the corresponding left-invariant differential operator on $S$ given by the Lie derivative $Xf(x)= \left.\frac \d{\d t}f(x\exp tX)\right|_{t=0}.$ Viewing $X_0,X_1,\dots,X_{n-1}$ in this way as left-invariant vector fields, the Riemannian gradient $\nabla$ on $S$ is defined by
\begin{align}\label{riemgrad}
\nabla f=\sum_{i=0}^{n-1}(X_if)\,X_i,
\end{align}
for any function $f$ in $C^\infty(S)$. Its Riemannian norm, denoted by $|\nabla f|_\g$, is given by
\begin{align*}
|\nabla f|_\g=\left(\sum_{i=0}^{n-1}|X_if|^2\right)^{\frac12}.
\end{align*}
The distinguished Laplacian is the left-invariant operator $\L$ defined by 
\begin{align}\label{e1.3}
\L=-\sum_{i=0}^{n-1} X_i^2.
\end{align}
It is self-adjoint on $C^{\infty}_c(S)\subset L^2(S)$. Let $\Delta_S$ be the Laplace--Beltrami operator on $S$, and denote by $\Delta_Q$ the shifted Laplacian $-\Delta_S - Q^2/4$. Recall that the spectra of $\Delta_Q$ on $L^2(S,\d\lambda)$ and $\L$ on $L^2(S,\d\rho)$ are both $[0, \infty)$. In particular, we know from \cite[Proposition 2]{As} that
\begin{align*}
\L f=\delta^{1\over2}\, \Delta_Q(\delta^{-{1\over 2}} f),
\end{align*}
for smooth radial functions $f$ on $S$. It follows from the spectral theorem that, for each bounded Borel measurable function $\psi$ on $\mathbb{R}_+$, both operators $\psi(\sqrt{\Delta_Q})$ and $\psi(\sqrt{\L})$ are $L^2$-bounded, and satisfy 
\begin{align*}
\psi(\sqrt{\L})f=\delta^{1\over2}\psi(\sqrt{\Delta_Q})(\delta^{-{1\over 2}}f),
\end{align*}
for all radial functions $f$ in $C_{c}^{\infty}(S)$.

Let $k_\psi$ and $\kappa_\psi$ be the convolution kernel of $\psi(\sqrt{\L})$ and $\psi(\sqrt{\Delta_Q})$, respectively, that is 
\begin{align*}
\psi(\sqrt{\L})(f)=f*k_\psi\quad\text{and}\quad \psi(\sqrt{\Delta_Q})(f)=f*\kappa_\psi,
\end{align*}
where ``$*$'' denotes the convolution on $S$ defined by
\begin{align}\label{con}
(f*g)(x)=\int_S f(xy)\,g(y^{-1})\,\d\lambda(y),
\end{align}
for all functions $f$ and $g$ in $C_{c}(S)$. According to \cite[Proposition 3.1]{MuVa}, the kernel $\kappa_\psi$ is radial and given by
\begin{align}\label{def:ker kappa}
\kappa_\psi(x)={2^{m_\fz-2}\Gamma({n\over 2})\over \uppi^{{n\over 2}+1}}\int_0^\infty \psi(\lambda)\,\varphi_\lambda(R(x))\,|\bc(\lambda)|^{-2}\,\d \lambda.
\end{align}
Recall that $R(x)$ denotes the distance between the point $x\in S$ and the identity. We deduce from \cite[Proposition 3.1]{MuVa} that 
\begin{align}\label{e3.4}
k_\psi(x)=\delta(x)^{\frac12}\kappa_\psi(x)={2^{m_\fz-2}\Gamma({n\over 2})\over \uppi^{{n\over 2}+1}}\delta(x)^{\frac12}\int_0^\infty \psi(\lambda)\,\varphi_\lambda(R(x))\,|\bc(\lambda)|^{-2}\,\d \lambda.
\end{align}

\smallskip
\subsection{The local Hardy space}\label{h1}
We recall in the section the definition of the local atomic Hardy space $\mathfrak{h}^1(S)$, which can be viewed as the analogue, in the setting of Damek--Ricci spaces, of the local Hardy space introduced by Goldberg in the Euclidean case \cite{G}. The local Hardy space was further developed and studied by Meda and Volpi \cite{MV} and by Taylor \cite{T} in more general contexts. It is straightforward to verify that Damek--Ricci spaces satisfy the geometric assumptions of \cite{MV} and \cite{T}, so that the corresponding theory applies to our setting.

A function $a$ in $L^1(S)$ is called a \emph{standard $\fh^1$-atom} if $a$ is  supported in a ball $B$ of radius less than $1$, and satisfies
\begin{enumerate}[(i)]
\item size condition: $\|a\|_{L^2(S)}\leq \rho(B)^{-1/2}$;
\item cancellation condition: $\int a \,\d\rho=0$. 
\end{enumerate}
A \emph{global $\fh^1$-atom} is a function $a$ in $L^1(S)$ supported in a ball $B$ of radius $1$ such that $\|a\|_{L^2(S)}\leq \rho(B)^{-1/2}$. Standard and global $\fh^1$-atoms will be referred to as \emph{admissible atoms}.
The Hardy space $\fh^1(S)$ is the space of functions $f$ in $L^1(S)$ such that $f=\sum_jc_ja_j$, where $\sum_j|c_j|<\infty$ and $a_j$ are admissible atoms. The norm $\|f\|_{\fh^1(S)}$ is defined as the infimum of $\sum_j|c_j|<\infty$ over all atomic decompositions of $f$. We now introduce several technical lemmas that will be useful in the subsequent analysis.

\begin{lemma}\label{lem2.7.0}
Let $T$ be a bounded linear operator on $L^2(S)$ with Schwartz kernel $K_T$, which satisfies the following two conditions: 
\begin{asparaenum}[(i)]
\item The local H\"{o}rmonder condition:
\begin{align*}
\sup_B\sup_{y,y'\in B}\int_{(2B)^c}|K_T(x,y)-K_T(x,y')|\,\d \rho(x)<\infty,
\end{align*}
where the first supremum is taken over all balls $B$ of radii at most $1$;
\item The size condition:
\begin{align*}
\sup_{y\in S}\int_{(B(y,2))^c}|K_T(x,y)|\,\d\rho(x)<\infty.
\end{align*}
\end{asparaenum}
Then, the operator $T$ is bounded from $\fh^1(S)$ to $L^1(S)$.
\end{lemma}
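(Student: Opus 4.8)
The plan is to establish the $\fh^1(S)\to L^1(S)$ boundedness of $T$ by testing the operator on admissible atoms and verifying a uniform bound $\|Ta\|_{L^1(S)}\lesssim 1$, which is equivalent (by a standard argument for local atomic Hardy spaces, see \cite{MV,T}) to the claimed boundedness on $\fh^1(S)$. Let $a$ be an admissible atom supported in a ball $B=B(y_0,r)$ with $r\le 1$. I would split the integral $\int_S|Ta(x)|\,\d\rho(x)$ into the local part $\int_{2B}|Ta(x)|\,\d\rho(x)$ and the far part $\int_{(2B)^c}|Ta(x)|\,\d\rho(x)$.

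For the local part, I would use the $L^2$-boundedness of $T$ together with the Cauchy--Schwarz inequality and the doubling property of $\rho$ on balls of controlled radius (recall $\rho(B_r)\sim r^n$ for $0<r\le 1$): namely
\begin{align*}
\int_{2B}|Ta(x)|\,\d\rho(x)
\le \rho(2B)^{1/2}\,\|Ta\|_{L^2(S)}
\lesssim \rho(B)^{1/2}\,\|T\|_{L^2\to L^2}\,\|a\|_{L^2(S)}
\lesssim \rho(B)^{1/2}\cdot\rho(B)^{-1/2}=1,
\end{align*}
using the size condition $\|a\|_{L^2(S)}\le\rho(B)^{-1/2}$ in the last step. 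This handles both standard and global atoms since in both cases $r\le 1$ and the volume comparison $\rho(2B)\lesssim\rho(B)$ holds for such small balls.

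For the far part, I would treat the two types of atoms separately. If $a$ is a \emph{global} atom (so $r=1$), then for $x\in(2B)^c$ one has $x\in (B(y_0,2))^c$, and writing $Ta(x)=\int_S K_T(x,y)\,a(y)\,\d\rho(y)$ with $\supp a\subset B(y_0,1)$, I would estimate $\int_{(2B)^c}|Ta(x)|\,\d\rho(x)\le\int_{B}|a(y)|\left(\int_{(B(y,2))^c}|K_T(x,y)|\,\d\rho(x)\right)\d\rho(y)$ after noting $(2B)^c\subset(B(y,2))^c$ for $y\in B$; the inner integral is bounded by the size condition (ii), and $\int_B|a(y)|\,\d\rho(y)\le\rho(B)^{1/2}\|a\|_{L^2(S)}\le 1$. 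If $a$ is a \emph{standard} atom, I would instead exploit the cancellation condition $\int a\,\d\rho=0$ to write $Ta(x)=\int_S\bigl(K_T(x,y)-K_T(x,y_0)\bigr)a(y)\,\d\rho(y)$, so that
\begin{align*}
\int_{(2B)^c}|Ta(x)|\,\d\rho(x)
\le\int_B|a(y)|\left(\int_{(2B)^c}|K_T(x,y)-K_T(x,y_0)|\,\d\rho(x)\right)\d\rho(y)
\lesssim\int_B|a(y)|\,\d\rho(y)\lesssim 1,
\end{align*}
where the inner integral is controlled by the local Hörmander condition (i) (applied with the ball $B$ of radius $r\le 1$ and the points $y,y_0\in B$), and the last step again uses Cauchy--Schwarz and the size condition.

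The only genuinely delicate point is the bookkeeping to make the argument uniform over \emph{all} admissible atoms simultaneously and to be careful that the two hypotheses are each invoked on the correct class: condition (i) is tailored to standard atoms (it needs the cancellation, and the supremum over small balls matches their support), while condition (ii) is what saves the global atoms (which have no cancellation). Beyond that, the proof is a routine adaptation of the classical Goldberg-type argument to the doubling-at-small-scales structure of $S$; no new ideas are needed once the split local/far and standard/global dichotomy is set up correctly.
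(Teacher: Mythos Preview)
Your argument is the standard one and is correct in outline, but there is a small bookkeeping error in the far-part estimate for global atoms. You assert $(2B)^c\subset (B(y,2))^c$ for $y\in B=B(y_0,1)$, which is equivalent to $B(y,2)\subset B(y_0,2)$; this fails whenever $y\ne y_0$, since a point at distance close to $2$ from $y$ on the far side of $y_0$ can lie at distance close to $3$ from $y_0$. The fix is trivial: split at $3B$ rather than $2B$. For $x\in(3B)^c$ and $y\in B$ one has $d(x,y)\ge d(x,y_0)-d(y,y_0)>3-1=2$, so indeed $(3B)^c\subset(B(y,2))^c$ and condition~(ii) applies; the enlarged local piece $\int_{3B}|Ta|\,\d\rho$ is still controlled by Cauchy--Schwarz because $\rho(3B)\lesssim\rho(B)$ (the ratio $\rho(B(y_0,3))/\rho(B(y_0,1))$ is independent of $y_0$ and bounded). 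With this adjustment your proof goes through.

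For comparison, the paper does not write out this argument at all: its proof consists of a single sentence citing \cite[Theorem~8.2]{CaMaMe09} and \cite[Proposition~4.5(ii)]{CaMaMe10}. Those references contain precisely the atom-by-atom estimate you carry out, together with the (not entirely trivial) fact---which you rightly flag as requiring a reference to \cite{MV,T}---that uniform $L^1$ bounds on all admissible atoms suffice for $\fh^1\to L^1$ boundedness. So your proposal is a self-contained unpacking of what the paper imports wholesale; the mathematical content is the same, and your version has the advantage of making transparent exactly where each of the hypotheses (i) and (ii) is used.
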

\begin{proof}
The proof is a combination of \cite[Theorem 8.2]{CaMaMe09} and \cite[Proposition 4.5 (ii)]{CaMaMe10}.
\end{proof}
Recall that the symbol class is defined by
\begin{align*}
\cS^\sigma=\left\{m\in C^\infty(\R):\|m\|_{\cS^\sigma,k}:=\sup_{\lambda\in\R}\,(1+\lambda^2)^{-{\sigma-k\over 2}}|m^{(k)}(\lambda)|\leq C_{\alpha,k}\text{ for all }k\in \mathbb{N}\right\}.
\end{align*}
\begin{lemma}\label{lem2.6}
Suppose that $m\in\cS^\sigma$ is an even symbol. Then the following statements hold.
\begin{asparaenum}[(i)]
\item If $\sigma=0$, then the multiplier $m(\L)$ is bounded from $\fh^1(S)$ to $L^1(S)$, and is bounded on $L^p(S)$ for all $1<p<\infty$.
\item If $\sigma=-1$, then the operator $|\nabla m(\L)\cdot|_\g$, where $\nabla$ denotes the Riemannian gradient on $S$, is bounded from $\fh^1(S)$ to $L^1(S)$, and from $L^p(S)$ to $L^p(S)$ for all $1<p<\infty$.
\end{asparaenum}
\end{lemma}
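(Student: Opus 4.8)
The plan is to reduce the two statements to the single-operator mapping criterion provided by Lemma \ref{lem2.7.0}, namely that boundedness $\fh^1(S)\to L^1(S)$ follows once the Schwartz kernel satisfies the local Hörmander condition (i) and the size condition (ii). For part (i), since $m\in\cS^0$ is even, the multiplier $m(\sqrt\L)$ has convolution kernel $k_m(x)=\delta(x)^{1/2}\kappa_m(x)$ with $\kappa_m$ given by \eqref{def:ker kappa}, so the Schwartz kernel is $K(x,y)=k_m(y^{-1}x)$ against $\d\lambda$; using $\delta(y^{-1}x)=\delta(x)\delta(y)^{-1}$ one rewrites integrals against $\d\rho(x)$ in terms of the radial profile of $\kappa_m$. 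Thus everything comes down to decay estimates for $\kappa_m(R)$ and for the difference $\kappa_m(R)-\kappa_m(R')$ with $R,R'$ comparable. First I would split the defining integral \eqref{def:ker kappa} into a low-frequency piece $|\lambda|\le 1$ and a high-frequency piece $|\lambda|\ge 1$. On the low-frequency part one uses the bound \eqref{est:spherical} for $\varphi_\lambda$ and the fact that $|\bc(\lambda)|^{-2}$ is integrable near $0$ (it behaves like $|\lambda|^{n-1}$ there up to the $ax+b$ subtlety, but in any case is locally integrable), giving a kernel bounded by $C(1+R)\e^{-QR/2}$, which after multiplying by $\delta^{1/2}$ yields an $L^1(\d\rho)$ function on $\{R\ge 1\}$ uniformly; the ball-local part $\{R\le 1\}$ is handled by the standard Euclidean-type Calderón--Zygmund theory, since on bounded sets the geometry is uniformly comparable to $\R^n$ and $\cS^0$ is exactly the Mihlin--Hörmander class. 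On the high-frequency part one inserts the Harish--Chandra expansion of $\varphi_\lambda$ together with the asymptotics \eqref{est:cfunction} for $\bc(\lambda)^{-1}$ and integrates by parts in $\lambda$ repeatedly, exploiting the symbol bounds $|m^{(k)}(\lambda)|\lesssim (1+\lambda^2)^{k/2}$ (wait — for $\sigma=0$ this is $|m^{(k)}(\lambda)|\lesssim |\lambda|^{-k}$ at infinity, which is precisely what makes the integration by parts gain decay): each integration by parts produces a factor $R^{-1}$ (or a harmless $\e^{-QR/2}$ from differentiating the spherical-function profile), so that $\kappa_m(R)$ decays faster than any power of $R$ away from $R\sim 0$, again giving the uniform $L^1(\d\rho)$ bound after the $\delta^{1/2}$ correction; the Hörmander difference estimate (i) is obtained the same way by additionally using the mean value theorem in $R$ and the derivative bounds on $\partial_R\varphi_\lambda$.

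For part (ii), one writes $|\nabla m(\sqrt\L)f|_\g = \big(\sum_{i=0}^{n-1}|X_i m(\sqrt\L)f|^2\big)^{1/2}$ and observes that each $X_i m(\sqrt\L)$ is again a convolution operator (on the left, $X_i$ commutes with right convolution) whose kernel is $X_i k_m$, and that $X_i k_m = X_i(\delta^{1/2}\kappa_m)$ splits, by the product rule and the structure equations $\operatorname{ad}(H)$, into a term $\lesssim \delta^{1/2}\kappa_m$ (from differentiating $\delta^{1/2}$, which only costs a bounded factor) plus $\delta^{1/2}\,X_i\kappa_m$; since $\kappa_m$ is radial, $X_i\kappa_m(x) = (\partial_R\kappa_m)(R(x))\,(X_i R)(x)$ and $|X_i R|\le 1$ pointwise (it is a unit Riemannian gradient component of the distance function), so pointwise $|\nabla k_m|_\g \lesssim \delta^{1/2}\big(|\kappa_m| + |\partial_R \kappa_m|\big)$. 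Now $\partial_R\kappa_m$ is the radial profile of the kernel of the operator obtained by replacing $m(\lambda)$ with $\i\lambda\cdot$(something $O(1)$-ish) — more precisely, differentiating \eqref{def:ker kappa} in $R$ moves the $R$-derivative onto $\varphi_\lambda$, and using the Harish--Chandra expansion $\partial_R\varphi_\lambda(R)$ is, modulo exponentially decaying remainders, of size $|\lambda|\,|\varphi_\lambda(R)|$ for $|\lambda|\ge 1$. Hence $\partial_R\kappa_m$ is, up to lower-order terms, the kernel of $\widetilde m(\sqrt\L)$ with $\widetilde m(\lambda)=\lambda\, m(\lambda)\in\cS^0$ when $m\in\cS^{-1}$. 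Therefore the estimates for $\partial_R\kappa_m$ reduce to the $\sigma=0$ case already treated in part (i), and the same low/high-frequency decomposition and integration-by-parts scheme gives conditions (i) and (ii) of Lemma \ref{lem2.7.0} for the (vector-valued) kernel of $|\nabla m(\sqrt\L)\cdot|_\g$; Lemma \ref{lem2.7.0} then yields $\fh^1\to L^1$ boundedness, and the $L^p$ boundedness for $1<p<\infty$ follows by interpolation with the trivial $L^2$ bound (which holds since $m\in\cS^0$ resp. $\lambda m(\lambda)\in\cS^0$ makes $m(\sqrt\L)$ resp. $|\nabla m(\sqrt\L)\cdot|_\g$ bounded on $L^2$ by the spectral theorem and the identity $\||\nabla F|_\g\|_{L^2}^2 = \langle \L F, F\rangle$) together with duality for $p>2$.

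The main obstacle I expect is the high-frequency analysis of $\kappa_m(R)$ near the critical scale where $R$ is moderate (say $1\lesssim R\lesssim$ a few) and simultaneously $R\lambda\sim 1$, where naive integration by parts in $\lambda$ does not yet gain anything and one must instead use the oscillation of $\varphi_\lambda(R)$ carefully — this is where the explicit Harish--Chandra asymptotics of $\varphi_\lambda$ and its $R$-derivatives, stationary-phase-type estimates for the resulting $\lambda$-integral, and the precise form of the error terms in \eqref{expansion:cfunction} all enter. A related subtlety is uniformity in $R$ of the implied constants: one wants a single $L^1(S,\d\rho)$ bound for $k_m$ (and $\nabla k_m$) that is finite, which requires combining the $R\le 1$ Calderón--Zygmund estimate, the $R\sim 1$ transitional estimate, and the $R\gg 1$ rapid-decay estimate, paying attention to the fact that the weight $\delta^{1/2}$ exactly cancels the exponential volume growth $\e^{QR}$ against the $\e^{-QR/2}$ decay coming from $\varphi_0$ and the $\e^{-QR/2}$ from $A(R)$, as recorded in Lemma \ref{lem2.2} and Lemma \ref{lem2.1}. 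Finally, a minor but necessary point is verifying that the difference quotient estimate (i) survives after passing from the right-convolution kernel $k_m(y^{-1}x)$ to the genuine Schwartz kernel $K(x,y)$ against $\d\rho$, which is where Lemma \ref{lem2.1} (the $\ell=0,1$ bounds on $\int_{\SS^{n-1}}|\partial_R^\ell \delta(x(R,\omega))^{-1/2}|\,\d\omega$) is used to control the contribution of the modular factor and its derivative.
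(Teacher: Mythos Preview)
Your plan is essentially the paper's own argument: verify the two hypotheses of Lemma~\ref{lem2.7.0} via pointwise bounds on the convolution kernel $k_m=\delta^{1/2}\kappa_m$ and its gradient, then interpolate with the trivial $L^2$ bound and dualize for $p>2$. The paper records the needed kernel bounds as $|K(x,y)|+|(\nabla K(x,\cdot))(y)|_\g\lesssim\delta(xy)^{1/2}d(x,y)^{-N}$ for $d(x,y)\ge1$ and $|(\nabla K(x,\cdot))(y)|_\g\lesssim\delta(xy)^{1/2}d(x,y)^{-n-1}$ for $d(x,y)\le1$ (deferring the derivation to \cite{WaYa}), which is exactly what your low/high-frequency split plus Harish--Chandra expansion plus repeated integration by parts produces.

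Two small corrections. First, the lemma that converts these pointwise kernel bounds into the size and local H\"ormander conditions is Lemma~\ref{lem2.2} (the radial integration formula $\int_S\delta^{1/2}\kappa\,\d\rho=\int_0^\infty\kappa(r)V(r)\,\d r$), not Lemma~\ref{lem2.1}; the latter controls the sphere integral of $\delta(x(R,\omega))^{-1/2}$ and its $R$-derivative and is used elsewhere in the paper, for the spherical-average operator $\varphi_{\sqrt\L}(t)$, not for multiplier kernels. Second, the ``obstacle'' you anticipate at moderate $R$ with $R\lambda\sim1$ is a phantom: once $R\gtrsim1$ and $|\lambda|\gtrsim1$ one automatically has $R\lambda\gtrsim1$, so integration by parts against the Harish--Chandra phase $\e^{\pm\i\lambda R}$ gains uniformly there; the genuine transition $R\lambda\sim1$ occurs only in the local regime $R\lesssim1$, which you have already (correctly) delegated to standard Calder\'on--Zygmund analysis.
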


\begin{proof}
We briefly outline the proof of this lemma. Following the argument of \cite[Lemma A.1]{WaYa}, we obtain that the Schwartz kernel $K$ of any operator in (i) or (ii) satisfies
\begin{align*}
|K(x,y)|+|(\nabla K(x,\cdot))(y)|_\g
\lesssim \delta(xy)^{1\over 2}(d(x,y))^{-N} 
\quad\text{if }d(x,y)\geq 1,
\end{align*}
for any $N\in\mathbb{N}^*$, and
\begin{align*}
|(\nabla K(x,\cdot))(y)|_\g
\lesssim \delta(xy)^{1\over 2}(d(x,y))^{-n-1}
\quad\text{if } 0<d(x,y)\leq 1.
\end{align*}
Using Lemma \ref{lem2.2} and reasoning as in the proof of \cite[Lemma 2.11]{WaYa}, we verify that both the local Hörmander condition and the size condition are satisfied. Finally, a standard interpolation argument yields the $L^p$-boundedness.
\end{proof}

We conclude this section by recalling the following analytic interpolation property.

\begin{lemma}\label{lem2.7}
Denote by $\Sigma$ the closed strip $\{\zeta\in\mathbb{C}:\Re \zeta\in[0,1]\}$. Suppose that $\{T_\zeta\}_{\zeta\in\Sigma}$ is a family of uniformly bounded operators on $L^2(S)$ such that the map $\zeta \mapsto \int_G T_\zeta(f)\, g\,\d \rho$ is continuous on $\Sigma$ and analytic in the interior of $\Sigma$, whenever $f,g\in L^2(S)$. Moreover, assume that
\begin{align*}
\|T_\zeta f\|_{L^1(S)}\leq A_1\|f\|_{\fh^1(S)}\quad\text{for all }f\in L^2(S)\cap \fh^1(S) \text{ and }\Re\zeta=0,
\end{align*}
and
\begin{align*}
\|T_\zeta f\|_{L^2(S)}\leq A_2\|f\|_{L^2(S)}\quad\text{for all }f\in L^2(S)\text{ and }\Re\zeta=1,
\end{align*}
where $A_1$ and $A_2$ are positive constants independent of $\Im \zeta\in\R$. Then for every $\vartheta\in(0,1)$ the operator $T_\vartheta$ is bounded on $L^{q_\vartheta}$ with $1/q_\vartheta=1-\vartheta/2$ and
\begin{align*}
\|T_\vartheta f\|_{L^{q_\vartheta}(S)}\leq C\,A_1^{1-\vartheta} A_2^\vartheta\|f\|_{L^{q_\vartheta}(S)}\quad\text{for all }f\in L^2(S)\cap L^{q_\vartheta}(S).
\end{align*}
\end{lemma}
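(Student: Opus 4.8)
The plan is to obtain this lemma as an instance of Stein's theorem on interpolation of analytic families of operators, once the relevant complex interpolation spaces have been identified. Concretely, I would work with the interpolation couple $(\fh^1(S),L^2(S))$ on the domain side and $(L^1(S),L^2(S))$ on the target side, and use the identifications
\[
[\fh^1(S),L^2(S)]_\vartheta=[L^1(S),L^2(S)]_\vartheta=L^{q_\vartheta}(S),\qquad \tfrac1{q_\vartheta}=1-\tfrac\vartheta2,
\]
with equivalence of norms. The second identity is the classical Calderón computation of complex interpolation of $L^p$-spaces. The first is the Damek--Ricci analogue of the well-known fact that the complex interpolation space between the (local) Hardy space and $L^2$ is $L^{q_\vartheta}$; it follows from the atomic and maximal-function descriptions of $\fh^1(S)$ together with the local Calderón--Zygmund theory developed in \cite{MV,T} (compare \cite{G} in the Euclidean case and \cite{WaYa} on the $ax+b$ group), since the couple embeds continuously in $L^1(S)+L^2(S)$ because $\|f\|_{L^1(S)}\lesssim\|f\|_{\fh^1(S)}$.

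Granting these identifications, I would fix $\vartheta\in(0,1)$ and $f\in L^2(S)\cap L^{q_\vartheta}(S)$, and, given $\varepsilon>0$, choose by definition of the interpolation norm a function $F$ on $\Sigma$, continuous and bounded with values in $\fh^1(S)+L^2(S)$ and analytic in the interior, such that $F(\vartheta)=f$, every $F(\zeta)$ belongs to $L^2(S)$, and
\[
\sup_{y\in\R}\|F(\i y)\|_{\fh^1(S)}+\sup_{y\in\R}\|F(1+\i y)\|_{L^2(S)}\le(1+\varepsilon)\,\|f\|_{L^{q_\vartheta}(S)};
\]
after the usual reduction one may take $F(\zeta)=\sum_k\phi_k(\zeta)\,h_k$ with finitely many $h_k\in L^2(S)$ and scalar functions $\phi_k$ analytic on $\Sigma$, so that analyticity of the composed family will follow directly from the hypothesis of the lemma. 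Then I would set $G(\zeta):=T_\zeta(F(\zeta))$. Using the uniform $L^2$-bound on $\{T_\zeta\}$ and the weak analyticity of $\zeta\mapsto\int_S T_\zeta(h)\,g\,\d\rho$, one checks that $G$ is a continuous, bounded, $(L^1(S)+L^2(S))$-valued function on $\Sigma$, analytic in its interior, with $G(\vartheta)=T_\vartheta f$; on the boundary lines the two endpoint estimates give $\|G(\i y)\|_{L^1(S)}\le A_1\|F(\i y)\|_{\fh^1(S)}$ and $\|G(1+\i y)\|_{L^2(S)}\le A_2\|F(1+\i y)\|_{L^2(S)}$. Applying the three-lines lemma to $\zeta\mapsto A_1^{\zeta-1}A_2^{-\zeta}G(\zeta)$ shows that $G$ is an admissible extension realising the $[L^1(S),L^2(S)]_\vartheta$-norm of $T_\vartheta f$, whence
\[
\|T_\vartheta f\|_{L^{q_\vartheta}(S)}\lesssim\|T_\vartheta f\|_{[L^1(S),L^2(S)]_\vartheta}\le A_1^{1-\vartheta}A_2^{\vartheta}\,(1+\varepsilon)\,\|f\|_{L^{q_\vartheta}(S)}.
\]
Letting $\varepsilon\downarrow0$ and then using the density of $L^2(S)\cap L^{q_\vartheta}(S)$ in $L^{q_\vartheta}(S)$ gives the assertion.

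The routine points — the admissible growth of the involved functions along vertical lines (a consequence of the uniform $L^2$-bound on $\{T_\zeta\}$), the density of $L^2(S)\cap\fh^1(S)$ in $\fh^1(S)$ and of $L^2(S)\cap L^{q_\vartheta}(S)$ in $L^{q_\vartheta}(S)$, and the passage from the weak analyticity assumed in the lemma to the analyticity of $G$ for finite-rank families $F$ — would be handled exactly as in the standard treatment of Stein's analytic interpolation theorem. I expect the only genuinely delicate ingredient to be the interpolation identity $[\fh^1(S),L^2(S)]_\vartheta=L^{q_\vartheta}(S)$ itself: since $(S,d,\d\rho)$ has exponential volume growth and is only \emph{locally} doubling, one cannot invoke the classical doubling-space arguments verbatim, but must instead exploit the local character of $\fh^1(S)$ (standard atoms supported in balls of radius $<1$, together with global atoms of radius $1$) and the corresponding local $\mathrm{bmo}(S)$--$\fh^1(S)$ duality of \cite{MV,T}. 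Once that identification is in place, the remainder is bookkeeping within the complex interpolation framework.
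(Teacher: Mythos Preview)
Your proposal is correct and follows essentially the same route as the paper: the paper's proof consists of a two-line citation of \cite[Theorem 5 (ii)]{MV} for the interpolation identity $[\fh^1(S),L^2(S)]_\vartheta=L^{q_\vartheta}(S)$ and of \cite[Theorem 1]{CwJa} for the abstract analytic-family interpolation, and your argument is precisely an unpacking of those two ingredients. The only cosmetic difference is that you run the three-lines argument by hand rather than invoking the Cwikel--Janson theorem as a black box.
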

\begin{proof}
This follows from \cite[Theorem 5 (ii)]{MV} and \cite[Theorem 1]{CwJa}.
\end{proof}


\section{Proof of the main theorems}\label{sec3}
We prove our main theorems in this section, focusing on Theorem \ref{thm:main2}, since Theorem \ref{thm:main} follows from an interpolation argument. The proof of the necessity part of Theorem \ref{thm:main2} is a straightforward adaptation of the argument for the $ax+b$ group given in \cite{WaYa}, and will therefore be omitted. The sufficiency part of Theorem \ref{thm:main2} follows from the results below concerning the corresponding multipliers.

\begin{proposition}\label{prop: main}
Let $m$ be an even symbol. Then, for all $t\in\mathbb{R}^*$, the following results hold.
\begin{asparaenum}[\rm (a)]
\item If $m\in\cS^{-(n-1)/2}$, then
\begin{align*}
\|m(\sqrt{\L})\,\cos(t\sqrt{\L})\|_{\fh^1(S)\to L^1(S)}\lesssim 1+|t|.
\end{align*}

\item If $m\in\cS^{-(n-1)/2+1}$, then
\begin{align*}
\left\|m(\sqrt{\L})\,{\sin(t\sqrt{\L})\over\sqrt{\L}}\right\|_{\fh^1(S)\to L^1(S)}\lesssim 1+|t|. 
\end{align*}
\end{asparaenum}
\end{proposition}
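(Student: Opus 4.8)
The plan is to reduce both statements to verifying, for the convolution kernel of each wave propagator, the two conditions of Lemma \ref{lem2.7.0}, with constants that grow at most linearly in $|t|$. By \eqref{e3.4}, writing $\psi_t(\lambda)=m(\lambda)\cos(t\lambda)$ (resp.\ $\psi_t(\lambda)=m(\lambda)\sin(t\lambda)/\lambda$), the kernel is $k_{\psi_t}(y^{-1}x)$ with $k_{\psi_t}(x)=\delta(x)^{1/2}\kappa_{\psi_t}(x)$, and $\kappa_{\psi_t}$ is the radial kernel \eqref{def:ker kappa}. Since the local Hörmander condition only involves $d(x,y)\ge 1$, and the size condition likewise concerns distances $\ge 2$, I will be free to decompose $\psi_t$ by a Littlewood--Paley partition $1=\sum_{j\ge 0}\chi_j(\lambda)$ with $\chi_0$ supported near the origin and $\chi_j$ supported on $|\lambda|\sim 2^j$, and control each dyadic piece $k_{\psi_t\chi_j}$ separately.

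The heart of the matter is a pointwise estimate of the form
\begin{align}\label{plan:kernelbound}
\bigl|k_{\psi_t\chi_j}(x)\bigr|+\bigl|\nabla_x k_{\psi_t\chi_j}(x)\bigr|_\g\lesssim 2^{j\sigma_0}\,(1+|t|)\,\delta(x)^{1/2}\,w_j(R(x))
\end{align}
for a suitable weight $w_j$ with good decay in $R$, where $\sigma_0=0$ in case (a) and $\sigma_0$ is chosen so the sum over $j$ still converges in case (b) (here the extra factor $\lambda^{-1}$ and the symbol order $-(n-1)/2+1$ conspire correctly). The main input is the Harish--Chandra expansion of $\varphi_\lambda(r)$: for $r$ bounded away from $0$ one writes $\varphi_\lambda(r)=\mathbf{c}(\lambda)\Phi_\lambda(r)+\mathbf{c}(-\lambda)\Phi_{-\lambda}(r)$ with $\Phi_\lambda(r)\sim \e^{(\i\lambda-Q/2)r}$ times a convergent series in $\e^{-r}$, together with the $\mathbf{c}$-function asymptotics \eqref{est:cfunction}--\eqref{expansion:cfunction}. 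Inserting this into \eqref{def:ker kappa}, the integral over $\lambda$ on the piece $|\lambda|\sim 2^j$ becomes, up to the slowly varying amplitude, an oscillatory integral $\int m(\lambda)\chi_j(\lambda)\,\e^{\i\lambda(\pm t\pm r)}\,\e^{-Qr/2}\,|\mathbf{c}(\lambda)|^{-2}\mathbf{c}(\pm\lambda)\,\d\lambda$; since $|\mathbf{c}(\lambda)|^{-2}\mathbf{c}(\pm\lambda)=\text{O}(|\lambda|^{(n-1)/2})$ and $m\in\cS^{-(n-1)/2}$, the amplitude is $\text{O}(1)$ with $\text{O}(2^{-jk})$ derivative gains, so repeated integration by parts in $\lambda$ yields rapid decay in $2^j(1+|\,|t|-r|\,)$ away from the light cone $r=|t|$, and on the light cone one simply takes absolute values, losing the factor $2^j\cdot 2^{-j}=1$ from the width $\sim 2^{-j}$ — except that the measure of $r\in(|t|-2^{-j},|t|+2^{-j})$ against the volume element, combined with Lemma \ref{lem2.1} for the angular integration of $\delta^{-1/2}$ and its $R$-derivative, produces exactly the factor $(1+|t|)$. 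For $r\le 1$ (or $r$ near $0$) one instead uses \eqref{est:spherical} and the fact that on a small ball the kernel is essentially a Euclidean-type fractional integral kernel, for which the required bounds are classical (this is the regime governed by the topological dimension $n$ and where the $n$ in the symbol order is used).

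With \eqref{plan:kernelbound} in hand, the local Hörmander condition is checked by writing $K_T(x,y)-K_T(x,y')=\int_0^1 \frac{d}{ds}K_T(x,\gamma(s))\,ds$ along a geodesic $\gamma$ from $y$ to $y'$ inside $B$, bounding the $y$-gradient by the Riemannian-gradient estimate in \eqref{plan:kernelbound}, and integrating $\delta(\gamma(s)^{-1}x)^{1/2}w_j(d(x,\gamma(s)))$ over $x\in(2B)^c$ using the integration formula of Lemma \ref{lem2.2} (after translating so that the relevant point is the identity); the radii-$\le 1$ restriction on $B$ makes $\delta$ comparable on $B$, and the summability in $j$ follows from the decay of $w_j$. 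The size condition is handled identically but more easily, integrating $|K_T(x,y)|$ directly. The quantitative bookkeeping shows the total is $\lesssim (1+|t|)\sum_j 2^{j\sigma_0}\|w_j\|_{L^1(V)}$, which is finite. I expect the principal obstacle to be the careful treatment of the transitional region $r\sim 1$ and, more substantially, making the oscillatory-integral analysis near the light cone uniform in $t$: one must extract the linear-in-$|t|$ growth and no more, which requires tracking how the $(1+r)$-type factors in \eqref{est:spherical}, the weight $V(r)\lesssim r\,\e^{Qr/2}$ from Lemma \ref{lem2.2}, and the $\e^{-Qr/2}$ decay of $\Phi_\lambda$ combine precisely on the set $r\sim|t|$ — this is exactly the place where the nonabelian $H$-type structure forces one to use the full strength of Lemma \ref{lem2.1} rather than an explicit formula, and where the proof genuinely departs from the $ax+b$ case of \cite{WaYa}.
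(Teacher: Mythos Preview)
Your plan has a genuine gap at the decisive point: the summability in $j$. At the critical symbol order $-(n-1)/2$, the dyadic kernel pieces do \emph{not} gain any decay in $j$. Concretely, with the Harish--Chandra expansion, the $j$-th piece of the radial kernel on the region $R\ge1$ satisfies
\[
|\kappa_{\psi_t\chi_j}(R)|\;\sim\;\e^{-QR/2}\,2^{j}\,(1+2^{j}\,|R-|t||)^{-N},
\]
and after multiplying by $\delta^{1/2}$ and integrating against $V(R)\sim R\,\e^{QR/2}$ (Lemma~\ref{lem2.2}), each piece contributes exactly $\sim(1+|t|)$ to the size integral $\int_{R\ge2}|k_{\psi_t\chi_j}|\,\d\rho$. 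There is no extra factor of $2^{-\varepsilon j}$ to sum, so $\sum_j\|w_j\|_{L^1(V)}$ diverges. Equivalently, the full kernel has a non-integrable singularity of order $|R-|t||^{-1}$ on the light cone $R=|t|$; the size condition of Lemma~\ref{lem2.7.0} simply fails for this operator when $|t|>2$. The same obstruction appears in the local H\"ormander condition: the gradient picks up an extra $2^j$, and after the usual $\min(2^jr_B,1)$ bookkeeping the tail sum $\sum_{2^j>r_B^{-1}}$ still diverges. This is precisely why the M\"uller--Vallarino argument in \cite{MuVa} stops at the subcritical exponents $\alpha_j>(n-1)|1/p-1/2|-j$; pushing a Littlewood--Paley kernel estimate through Lemma~\ref{lem2.7.0} cannot reach the endpoint.

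What the paper does instead --- and what your sketch is missing --- is to avoid kernel estimates for the full wave operator altogether. Using the asymptotics of $\varphi_\lambda(t)$ and $\varphi'_\lambda(t)$ (Lemmas~\ref{lem:spherical long time} and~\ref{lem:spherical short time}), one \emph{inverts} the relation to write, on the relevant frequency region,
\[
m(\lambda)\cos(\lambda t)\;=\;c_1\,A(t)^{1/2}\,\varphi_\lambda(t)\,m_1(\lambda)\;+\;c_2\,A(t)^{1/2}\,\varphi'_\lambda(t)\,m_2(\lambda)\;+\;\text{(subcritical remainder)},
\]
with $m_1\in\cS^0$ and $m_2\in\cS^{-1}$. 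The point is that the operator $A(t)^{1/2}\varphi_{\!\sqrt{\L}}(t)$ is the (rescaled) spherical mean \eqref{e4.1}, which is bounded $L^1\to L^1$ with norm $\lesssim(1+|t|)$ directly from Lemma~\ref{lem2.1} --- no frequency decomposition needed --- while $m_1(\sqrt{\L})$ and $|\nabla m_2(\sqrt{\L})\cdot|_\g$ are $\fh^1\to L^1$ by Lemma~\ref{lem2.6}. The $\varphi'$ term is handled by differentiating \eqref{e4.1} in $t$ and using both parts of Lemma~\ref{lem2.1}. This factorization (spherical mean)$\circ$(Mikhlin multiplier) is the Damek--Ricci version of the Peral/Tao idea, and it is the mechanism that replaces the divergent sum over scales. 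Your use of Lemma~\ref{lem2.1} is on the right track, but it must be applied to bound the $L^1\to L^1$ norm of the spherical mean operator itself, not to control an $L^1$ integral of the wave kernel.
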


The proof of Proposition \ref{prop: main} relies on the idea, originating from the critical case on Euclidean space, of relating the wave propagator to the spherical measure. See, for instance, \cite{Pe,Ta}. In our setting, the spherical measure is closely related to the spherical function given by \eqref{e2.10}, which therefore requires a detailed analysis. The following expression will be useful later on.

\begin{lemma}
Let $f$ be a suitable function on $S$. Then, for all $t>0$ and $z\in S$, we have
\begin{align}
\varphi_{\!\sqrt{\L}}(t)(f)(z)
={1\over\nu_n}\int_{\SS^{n-1}}\delta(x(t,\omega))^{-{1\over 2}} f(z\cdot x(t,\omega))\,\d \omega,\label{e4.1}
\end{align}
where $\nu_n$ is the surface area of the unit sphere $\SS^{n-1}$, $\delta$ is the modular function on $S$, and $x(t,\omega)$ is the polar coordinate given by \eqref{epol}.
\end{lemma}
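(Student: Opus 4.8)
The plan is to unwind the definition of the operator $\varphi_{\sqrt{\L}}(t)$ using the conjugation relation between the distinguished Laplacian $\L$ and the shifted Laplace--Beltrami operator $\Delta_Q$, and then apply the known spherical-measure formula for $\varphi_{\sqrt{\Delta_Q}}(t)$ on the symmetric-space side. Concretely, first I would recall from Subsection \ref{subsec2.2} that for radial $\psi$ one has $\psi(\sqrt{\L})h = \delta^{1/2}\,\psi(\sqrt{\Delta_Q})(\delta^{-1/2}h)$, and that the convolution kernels satisfy $k_\psi = \delta^{1/2}\kappa_\psi$, with $\kappa_\psi$ the radial kernel given by \eqref{def:ker kappa}. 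Taking $\psi(\lambda)=\varphi_\lambda(t)$ — which is exactly the spherical function evaluated at radius $t$, so that $\varphi_{\sqrt{\Delta_Q}}(t)$ has convolution kernel equal to the normalized spherical measure at radius $t$ — the kernel identity reduces the claim to the analogous statement on the symmetric-space side, namely that $\varphi_{\sqrt{\Delta_Q}}(t)$ is convolution against the uniform probability measure on the sphere of radius $t$.

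Next I would make that last point precise via the integral representation \eqref{fml:PhiDelta} of the spherical function, $\varphi_\lambda(r)=\nu_n^{-1}\int_{\SS^{n-1}}\delta(x(r,\omega))^{\i\lambda/Q-1/2}\,\d\omega$, which exhibits $\varphi_\lambda(r)$ as the spherical transform of the measure $\d\sigma_r = \nu_n^{-1}\delta(x(r,\cdot))^{-1/2}\,\d\omega$ pushed forward onto the sphere of radius $r$ in $S$; indeed the factor $\delta(x(r,\omega))^{\i\lambda/Q}$ is precisely $\varphi_\lambda$ acting on a point measure after averaging, matching the structure in \eqref{def:ker kappa} and \eqref{e2.9}. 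Combining this with the inversion formula and \eqref{con}, convolution by $\kappa_{\varphi_\bullet(t)}$ against a test function $f$ becomes $\int_{\SS^{n-1}}\delta(x(t,\omega))^{-1/2} f(z\cdot x(t,\omega))\,\d\omega$ up to the normalizing constant $\nu_n^{-1}$, after which the $\delta^{1/2}(z)\cdot\delta^{-1/2}(z)$ from the conjugation and the $\delta^{1/2}$ versus $\delta^{-1/2}$ bookkeeping in \eqref{e3.4} must be checked to cancel correctly, leaving exactly \eqref{e4.1}.

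I would present the argument as: (1) identify $\varphi_{\sqrt{\L}}(t)(f)(z) = (f * k_{\psi_t})(z)$ with $\psi_t(\lambda)=\varphi_\lambda(t)$; (2) use \eqref{e3.4} to write $k_{\psi_t} = \delta^{1/2}\kappa_{\psi_t}$ and \eqref{def:ker kappa} together with the inversion formula to identify $\kappa_{\psi_t}$ as the (radial) distribution whose spherical transform is $\lambda\mapsto\varphi_\lambda(t)$ — but by \eqref{fml:PhiDelta} this is exactly the pushforward of $\nu_n^{-1}\delta(x(t,\cdot))^{-1/2}\d\omega$ onto the sphere of radius $t$; (3) insert this into the convolution formula \eqref{con} and simplify the modular-function factors to arrive at \eqref{e4.1}. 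The main obstacle I anticipate is step (2): making rigorous the identification of $\kappa_{\psi_t}$ as a genuine (singular) measure on the sphere of radius $t$ rather than a smooth kernel, since $\psi_t$ is not a symbol and $\kappa_{\psi_t}$ is distributional; this is most cleanly handled by testing against $f\in C_c^\infty(S)$ and exchanging the order of the $\lambda$-integral (from inversion) with the spatial integral, using Fubini justified by the decay \eqref{est:spherical} and \eqref{est:cfunction} on suitable truncations, or equivalently by noting that the composition $\varphi_{\sqrt{\Delta_Q}}(t)$ is, by the very definition \eqref{e2.9}--\eqref{e2.10} of the spherical transform, the averaging operator over the sphere of radius $t$. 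The remaining modular-function bookkeeping in step (3) is routine but must be done carefully because $\delta$ is not constant on the sphere $x(t,\SS^{n-1})$.
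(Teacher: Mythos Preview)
Your approach is essentially the paper's: write $\varphi_{\sqrt{\L}}(t)f=f*k_{\psi_t}$ with $k_{\psi_t}=\delta^{1/2}\kappa_{\psi_t}$, identify $\kappa_{\psi_t}$ as the normalized spherical measure at radius $t$, and compute the convolution in polar coordinates. One correction to your step~(2): the radial distribution whose spherical transform is $\lambda\mapsto\varphi_\lambda(t)$ is, directly from \eqref{e2.9}, just $\nu_n^{-1}A(t)^{-1}\mu_t$ (the normalized Dirac mass at radius $t$), with \emph{no} $\delta^{-1/2}$ weight; the appeal to \eqref{fml:PhiDelta} here is a red herring. The factor $\delta(x(t,\omega))^{-1/2}$ in \eqref{e4.1} arises instead in step~(3), from $k_{\psi_t}(y^{-1})=\delta(y^{-1})^{1/2}\kappa_{\psi_t}(y^{-1})=\delta(y)^{-1/2}\kappa_{\psi_t}(y)$ inside the convolution \eqref{con}, using that $\delta$ is a group homomorphism and that $\kappa_{\psi_t}$, being radial, satisfies $\kappa_{\psi_t}(y^{-1})=\kappa_{\psi_t}(y)$. (Also note that the conjugation identity $\psi(\sqrt{\L})h=\delta^{1/2}\psi(\sqrt{\Delta_Q})(\delta^{-1/2}h)$ is only recorded in the paper for \emph{radial} $h$, so the kernel route via \eqref{e3.4} is indeed the one to use.)
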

\begin{proof}
Let $A(t)$ be the volume element given by \eqref{def:VolumeA}, and $\mu_{t}$ be the Dirac mass on $\R$ centered at $t$. We denote by $\d\sigma_t(r)=\nu_{n}^{-1} A(t)^{-1}\mu_t(r)$ the normalized spherical measure on $S$. Note that $\F(\d\sigma_t)=\varphi_\lambda(t)$. It follows from \eqref{e3.4} that
\begin{align*}
\varphi_{\!\sqrt{\L}}(t)(f)(z)=\big(f*(\delta^{1\over 2}\d\sigma_t)\big)(z).
\end{align*}
Combining with \eqref{con}, \eqref{epolint} and the fact that the modular function $\delta$ defines a group homomorphism from $S$ to $\mathbb{R}_{+}^{*}$, we obtain
\begin{align*}
\varphi_{\!\sqrt{\L}}(t)(f)(z)&={1\over\nu_n}{1\over A(t)}\int_S f(z\cdot x)\,\delta(x^{-1})^{1\over 2}\mu_t(R(x^{-1}))\,\d\lambda(x)\\
&={1\over\nu_n}{1\over A(t)}\int_0^\infty\left(\int_{\SS^{n-1}}\delta(x(r,\omega))^{-{1\over 2}} f(z\cdot x(r,\omega))\,\d \omega\right)\mu_t(r)\, A(r)\,\d r\\
&={1\over\nu_n}\int_{\SS^{n-1}}\delta(x(t,\omega))^{-{1\over 2}} f(z\cdot x(t,\omega))\,\d \omega,
\end{align*}
as expected.
\end{proof}
We now proceed to the proof of Proposition \ref{prop: main}. The argument is divided into two parts, corresponding to the short-time and long-time cases. Both require explicit asymptotic expansions of the spherical function and its derivative. We begin with the large-time analysis, for which the following lemma will be needed.

\begin{lemma}\label{lem:spherical long time}
Let $A(t)$ be the volume element given by \eqref{def:VolumeA}. Suppose that $t > 1$ and $\lambda \ge 1$. Then the following results hold.

\begin{asparaenum}[\rm (a)]
\item There exist symbols $a_t\in\cS^{-(n+1)/2}$, uniformly in $t>1$, such that 
\begin{align*}
\varphi_{\lambda}(t)
=c_n\, A(t)^{-\frac12}\lambda^{-\frac{n-1}{2}}
\cos\!\left(\lambda t - \frac{n-1}{4}\uppi\right)\,
+\,\mathfrak{r}_t(\lambda),
\end{align*}
where $c_n=2^{(n-1)/2}\uppi^{-1/2}\Gamma(n/2)$ and 
\begin{align*}
\mathfrak{r}_t(\lambda)
=A(t)^{-\frac12}
\left(\e^{\i \lambda t}a_t(\lambda)+\e^{-\i \lambda t}a_t(-\lambda)\right).
\end{align*}

\item There exist symbols $b_t\in\cS^{-(n-1)/2}$, uniformly in $t>1$, such that 
\begin{align*}
\varphi'_{\lambda}(t)
=-c_n\, A(t)^{-\frac12}\lambda^{-\frac{n-3}{2}}
\sin\!\left(\lambda t - \frac{n-1}{4}\uppi\right)\,
+\,\mathfrak{s}_t(\lambda),
\end{align*}
where
\begin{align*}
\mathfrak{s}_t(\lambda)
=A(t)^{-\frac12}
\left(\e^{\i \lambda t}b_t(\lambda)+\e^{-\i \lambda t}b_t(-\lambda)\right).
\end{align*}
\end{asparaenum}
\end{lemma}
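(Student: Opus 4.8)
The plan is to derive both expansions from the Harish--Chandra asymptotic expansion of $\varphi_\lambda(t)$, combined with the asymptotics \eqref{expansion:cfunction} of the $\bc$-function. Recall that the Harish--Chandra expansion expresses $\varphi_\lambda(t)$, for $t>1$ and $\lambda$ bounded away from $0$, as
\begin{align*}
\varphi_\lambda(t) = \bc(\lambda)\,\Phi_\lambda(t) + \bc(-\lambda)\,\Phi_{-\lambda}(t),
\end{align*}
where $\Phi_\lambda(t) = \e^{(\i\lambda - Q/2)t}\sum_{k\ge 0}\Gamma_k(\lambda)\e^{-kt}$ is the Harish--Chandra series, with $\Gamma_0\equiv 1$ and the $\Gamma_k(\lambda)$ rational in $\lambda$, uniformly bounded (together with all $\lambda$-derivatives, with the expected symbol gains) on $\{|\lambda|\ge 1\}$ and with the series converging uniformly for $t\ge 1$. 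The factor $\e^{-Qt/2}$ should be matched against $A(t)^{-1/2}\sim \text{const}\cdot\e^{-Qt/2}(1+O(\e^{-t}))$ via \eqref{pesoA}; more precisely $A(t)^{1/2}\e^{Qt/2}$ and its reciprocal are, for $t>1$, smooth bounded functions of $\e^{-t}$ with bounded derivatives in $t$, so absorbing them into the remainder symbols costs nothing.

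\textbf{Part (a).} Substituting \eqref{expansion:cfunction}, write $\bc(\lambda) = c_n'\,(\i\lambda)^{-(n-1)/2}(1 + \lambda^{-1}e(\lambda))$ with $e\in\cS^0$ on $\{|\lambda|\ge1\}$, where $c_n' = 2^{Q-1}\uppi^{-1/2}\Gamma(n/2)$. Then
\begin{align*}
\bc(\lambda)\Phi_\lambda(t) = c_n'\,(\i\lambda)^{-\frac{n-1}{2}}\,\e^{\i\lambda t}\,\e^{-\frac{Q}{2}t}\Big(1 + \lambda^{-1}e(\lambda)\Big)\Big(1 + \sum_{k\ge1}\Gamma_k(\lambda)\e^{-kt}\Big).
\end{align*}
The leading term is $c_n'\,(\i\lambda)^{-(n-1)/2}\e^{\i\lambda t}\e^{-Qt/2}$; using $(\i\lambda)^{-(n-1)/2} = \lambda^{-(n-1)/2}\e^{-\i(n-1)\uppi/4}$ for $\lambda>0$ and adding the conjugate term $\bc(-\lambda)\Phi_{-\lambda}(t)$ produces $c_n'\,\lambda^{-(n-1)/2}\e^{-Qt/2}\cdot 2\cos(\lambda t - \tfrac{n-1}{4}\uppi)$. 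After renormalizing $\e^{-Qt/2}$ to $A(t)^{-1/2}$ (the discrepancy contributes to $\mathfrak{r}_t$) and checking $2c_n'\cdot(\text{normalization constant}) = c_n$, one identifies the main term. Everything else — the $\lambda^{-1}e(\lambda)$ correction, the $k\ge1$ tail of the Harish--Chandra series, and the $A$-renormalization error — is of the form $\e^{\pm\i\lambda t}$ times a symbol in $\cS^{-(n+1)/2}$ uniformly in $t>1$ (the extra $\lambda^{-1}$ or $\e^{-t}$ decay gains one order), so collecting these into $a_t(\lambda)$ and $a_t(-\lambda)$ gives exactly the stated $\mathfrak{r}_t$. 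The symbol bounds are uniform in $t$ because the $\Gamma_k$ are $t$-independent and $\sum_k\|\Gamma_k\|\e^{-k} < \infty$, and differentiating the $\e^{-Qt/2}/A(t)^{1/2}$-type factors in $\lambda$ does nothing.

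\textbf{Part (b).} Differentiate the same identity in $t$. Since $\partial_t\Phi_\lambda(t) = (\i\lambda - Q/2)\e^{(\i\lambda-Q/2)t} + \e^{(\i\lambda-Q/2)t}\partial_t(\sum_{k\ge1}\Gamma_k\e^{-kt})$, the dominant contribution to $\varphi'_\lambda(t)$ comes from $\bc(\lambda)\cdot\i\lambda\cdot\e^{(\i\lambda-Q/2)t}$ (plus conjugate): this equals $c_n'\,(\i\lambda)^{-(n-3)/2}\e^{\i\lambda t}\e^{-Qt/2}$ up to lower order, and $(\i\lambda)^{-(n-3)/2}\e^{\i\lambda t} + \text{c.c.}$ assembles into $-2\lambda^{-(n-3)/2}\sin(\lambda t - \tfrac{n-1}{4}\uppi)$ (the sign and phase shift from $\i\cdot\e^{-\i(n-1)\uppi/4} = \e^{-\i(n-3)\uppi/4}$, whose real part combined across $\pm\lambda$ gives $-\sin$). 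The remaining pieces — the $-Q/2$ term (which carries an extra $\lambda^{-(n-1)/2}$, hence two orders better), the $\lambda^{-1}e(\lambda)$ correction of $\bc$, the $\partial_t$ of the Harish--Chandra tail (still $O(\e^{-t})$), and the $A$-renormalization — are all $\e^{\pm\i\lambda t}$ times symbols in $\cS^{-(n-1)/2}$ uniformly in $t>1$, which we collect into $b_t(\pm\lambda)$. The only subtlety is confirming that $\partial_t$ acting on the slowly varying factor $A(t)^{-1/2}$ (rather than on $\e^{\i\lambda t}$) does not spoil uniformity: since $A(t)^{-1/2}$ and its $t$-derivative are both $\lesssim\e^{-Qt/2}$ for $t>1$, this is harmless.

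\textbf{Main obstacle.} The genuinely delicate point is bookkeeping the \emph{uniformity in $t$} of the symbol seminorms of $a_t,b_t$: one must verify that the tail $\sum_{k\ge1}\Gamma_k(\lambda)\e^{-kt}$ and its $\lambda$-derivatives are bounded by an absolute constant for all $t>1$ (this is the standard convergence statement for the Harish--Chandra series, valid on Damek--Ricci spaces by the analysis of the ${}_2F_1$ representation \eqref{e2.10}, but must be invoked carefully), and that multiplying by the entire functions $A(t)^{\pm1/2}\e^{\pm Qt/2}$ of $\e^{-t}$ preserves these bounds. Once the Harish--Chandra expansion with uniform remainder control is in hand, the rest is algebra: matching constants and phases, and checking that each discarded term gains the claimed extra order in $\lambda$.
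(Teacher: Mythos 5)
Your proposal is correct and follows essentially the same route as the paper: both use the Harish--Chandra expansion of $\varphi_\lambda$ together with the asymptotics \eqref{expansion:cfunction} of the $\bc$-function, and treat part (b) by differentiating in $t$ and verifying that the extra pieces (from the $-Q/2$ term or equivalently $A'(t)/A(t)$, and from the tail of the series) all land in $\cS^{-(n-1)/2}$ uniformly in $t>1$. The only cosmetic difference is that the paper invokes the Damek--Ricci form of the expansion from \cite[pp.\ 735--736]{AnPiVa}, which already carries the $A(t)^{-1/2}$ prefactor and the uniform bounds \eqref{egl} on the coefficients $\Gamma_\ell$, so the ``$A$-renormalization'' step you anticipate is not needed.
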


\begin{proof}
This lemma is a consequence of the well-known Harish-Chandra expansion of the spherical function. It follows from \cite[pp. 735--736]{AnPiVa} that
\begin{align}\label{ephiasy}
\varphi_\lambda(t)=2^{-\frac{m_\fz}{2}} A(t)^{-\frac12}
\left(\e^{\i \lambda t}\bc(\lambda)\sum_{\ell=0}^\infty \Gamma_\ell(\lambda)\,\e^{-\ell t}+\e^{-\i \lambda t}\bc(-\lambda)\sum_{\ell=0}^\infty \Gamma_\ell(-\lambda)\,\e^{-\ell t}\right),
\end{align}
where the $\bc$-function is given by \eqref{e2.11}. The coefficients $\Gamma_\ell$ satisfy $\Gamma_0\equiv 1$, and for each $\ell\in\mathbb{N}^{*}$ and $k\in\mathbb{N}$,
\begin{align}\label{egl}
|\partial_\lambda^k\Gamma_\ell(\lambda)|\leq C \ell^d(1+|\lambda|)^{-k-1}
\qquad\forall\,\lambda\in\R^*,
\end{align}
for some constants $C>0$ and $d\geq 1$, see \cite[Lemma 1]{AnPiVa}. We rewrite \eqref{ephiasy} as
\begin{align}\label{ephiasyr}
\varphi_\lambda(t)
=2^{-\frac{m_\fz}{2}} A(t)^{-\frac12}
\left(\e^{\i \lambda t}\bc(\lambda)+\e^{-\i \lambda t}\bc(-\lambda)\right)+
A(t)^{-\frac12}\left(\e^{\i \lambda t}a_t(\lambda)+\e^{-\i \lambda t}a_t(-\lambda)\right),
\end{align}
where
\begin{align}\label{e3.5}
a_t(\lambda)=2^{-\frac{m_\fz}{2}} \bc(\lambda)\sum_{\ell=1}^\infty \Gamma_\ell(\lambda)\,\e^{-\ell t}
\end{align}
is a symbol in $S^{-(n+1)/2}$, according to \eqref{est:cfunction} and \eqref{egl}. We then prove (a) by substituting \eqref{expansion:cfunction} into \eqref{ephiasyr}.

To prove (b), we differentiate both sides of \eqref{ephiasyr} with respect to $t$, which yields
\begin{align*}
\varphi_\lambda'(t)
&=\i\lambda\,2^{-\frac{m_\fz}{2}} A(t)^{-\frac12}
\left(\e^{\i \lambda t}\bc(\lambda)-\e^{-\i \lambda t}\bc(-\lambda)\right) \\
&\quad -2^{-\frac{m_\fz}{2}-1} A(t)^{-\frac12}
{A'(t)\over A(t)}
\left(\e^{\i \lambda t}\bc(\lambda)+\e^{-\i \lambda t}\bc(-\lambda)
+ \e^{\i \lambda t}a_t(\lambda)+\e^{-\i \lambda t}a_t(-\lambda)
\right)\\
&\quad +2^{-\frac{m_\fz}{2}} A(t)^{-\frac12}\left(\e^{\i \lambda t}\partial_t(a_t)(\lambda)+\e^{-\i \lambda t}\partial_t(a_t)(-\lambda)\right).
\end{align*}
The middle term is easily handled, since $A'(t)/A(t)$ remains bounded for all $t>1$. By \eqref{e3.5} and \eqref{egl}, the last term satisfies 
\begin{align*}
\partial_t(a_t)(\lambda)=-2^{-\frac{m_\fz}{2}}\bc(\lambda)\sum_{\ell=1}^\infty \ell\, \Gamma_\ell(\lambda)\,\e^{-\ell t}
= \text{O} (\lambda^{-\frac{n-1}{2}})
\qquad\forall\,\lambda>1.
\end{align*}
Therefore, there exist some symbols $b_t\in\cS^{-(n-1)/2}$, uniformly in $t>1$, such that
\begin{align}\label{varphidasym}
\varphi'_\lambda(t)=\i\lambda\,2^{-\frac{m_\fz}{2}} A(t)^{-\frac12}
\left(\e^{\i \lambda t}\bc(\lambda)-\e^{-\i \lambda t}\bc(-\lambda)\right)
+A(t)^{-\frac12}\left(\e^{\i \lambda t}b_t(\lambda)+\e^{-\i \lambda t}b_t(-\lambda)\right).
\end{align}
We conclude the proof by substituting \eqref{est:cfunction} into \eqref{varphidasym}.
\end{proof}

Let us now turn to the proof of Theorem \ref{prop: main} in the large-time case.
\begin{proof}[Proof of Proposition \ref{prop: main} for $|t|>1$]
Without loss of generality, we may assume that $t>1$. Let $\eta \in C_c^\infty(\R)$ be an even cut-off function such that $\eta \equiv 1$ on $[-1,1]$. To prove part (a), let us assume that $m$ is an even symbol in $S^{-(n-1)/2}$. It follows from \cite[Theorem 6.1 (i)]{MuVa} that
\begin{align*}
\|\eta(\sqrt{\L})\,m(\sqrt{\L})\,\cos(t\sqrt{\L})\|_{L^1(S)\to L^1(S)} \leq C\,t,
\end{align*}
for some constant $C>0$ independent of $t$. Therefore, it remains to show that 
\begin{align}\label{e5.8}
\|(1-\eta(\sqrt{\L}))\,m(\sqrt{\L})\, \cos(t\sqrt{\L})\|_{\fh^1(S)\to L^1(S)} \leq C\,t.
\end{align}

To prove \eqref{e5.8}, we combine the elementary identity
\begin{align}\label{ecos}
\cos(\lambda t)=\cos\!\left(\lambda t-{n-1\over 4}\uppi\right)\cos\!\left({n-1\over 4}\uppi\right)-\sin\!\left(\lambda t-{n-1\over 4}\uppi\right)\sin\!\left({n-1\over 4}\uppi\right)
\end{align}
with Lemma \ref{lem:spherical long time} to obtain the following crucial observation: there exist constants $c_1,\,c_2>0$, independent of $t>1$, and symbols $a_{1,t}(\lambda),\,a_{2,t}(\lambda)\in\cS^{-1}$, uniformly in $t>1$, such that for all $\lambda\in\supp(1-\eta)$,
\begin{align*}
\cos(\lambda t)
=c_1\,A(t)^{\frac12}\lambda^{\frac{n-1}{2}}\varphi_\lambda(t)
+c_2\,A(t)^{\frac12}\lambda^{\frac{n-3}{2}}\varphi_\lambda'(t)
+\e^{\i \lambda t}a_{1,t}(\lambda)+\e^{-\i \lambda t}a_{2,t}(\lambda),
\end{align*}
where $A(r)$ is the volume element given by \eqref{def:VolumeA}. Therefore, for such $\lambda$, we have
\begin{align}\label{e5.9}
m(\lambda)\cos( \lambda t)
=c_1\,A(t)^{\frac12} m_1(\lambda)\,\varphi_\lambda(t) 
+ c_2\,A(t)^{\frac12} m_2(\lambda)\,\varphi_\lambda'(t) 
+ \e^{\i \lambda t}b_{1,t}(\lambda)+\e^{-\i \lambda t}b_{2,t}(\lambda)
\end{align}
with $m_1\in\cS^0$, $m_2\in\cS^{-1}$, and $b_{1,t}(\lambda), b_{2,t}(\lambda)\in\cS^{-1}$ uniformly in $t>1$. Note that, on $\supp (1-\eta)$, we can rewrite the remainder terms as 
\begin{align*}
B_t(\lambda)
=\e^{\i \lambda t}b_{1,t}(\lambda)+\e^{-\i \lambda t}b_{2,t}(\lambda)
=\widetilde{b}_{1,t}(\lambda)\cos( \lambda t)+\widetilde{b}_{2,t}(\lambda){\sin( \lambda t)\over \lambda},
\end{align*}
with $\widetilde{b}_{1,t}\in\cS^{-(n+1)/2}$ and $\widetilde{b}_{2,t}\in\cS^{-(n-1)/2}$ uniformly in $t>1$. We then deduce from \cite[Theorem 6.1]{MuVa} that 
\begin{align*}
\|(1-\eta(\sqrt{\L}))\,B_t(\sqrt{\L})\|_{L^1(S)\to L^1(S)}\leq C\, t.
\end{align*}

We now consider the principal terms in \eqref{e5.9}. Let us set
\begin{align*}
M_{1,t}(\lambda)
:=  A(t)^{\frac12}\varphi_\lambda(t)\,\widetilde{m}_1(\lambda)
\qquad\textnormal{and}\qquad 
M_{2,t}(\lambda)
:= A(t)^{\frac12}\varphi_\lambda'(t)\,\widetilde{m}_2(\lambda) ,
\end{align*}
with 
\begin{align*}
\widetilde{m}_1(\lambda):=(1-\eta(\lambda))\, m_1(\lambda)\in\cS^0
\qquad\textnormal{and}\qquad
\widetilde{m}_2(\lambda):=(1-\eta(\lambda))\, m_2(\lambda)\in\cS^{-1}.
\end{align*}
Therefore, the proof of \eqref{e5.8} reduces to showing that
\begin{align}\label{e5.11}
\|M_{j,t}(\sqrt{\L})\|_{\fh^1(S)\to L^1(S)}\leq C\, t, \quad j=1,2.
\end{align}
In the case where $j=1$, we know, on the one hand, from Lemma \ref{lem2.6} (i) that the operator $\widetilde{m}_1 (\sqrt{\L})$ is bounded from $\fh^1(S)$ to $L^1(S)$. On the other hand, combining \eqref{e4.1} with Lemma \ref{lem2.1}, we have
\begin{align*}
\|A(t)^{\frac12}\varphi_{\!\sqrt{\L}}(t)\|_{L^1(S)\to L^1(S)}\leq C\,\e^{\frac{Q}{2}t}\int_{\SS^{n-1}}\delta(x(t,\omega))^{-\frac12}\,\d\omega\leq C\, t,
\end{align*}
from where we obtain
\begin{align*}
\|M_{1,t}(\sqrt{\L})\|_{\fh^1(S)\to L^1(S)}\leq \|A(t)^{\frac12}\varphi_{\!\sqrt{\L}}(t)\|_{L^1(S)\to L^1(S)}\|\widetilde{m}_1(\sqrt{\L})\|_{\fh^1(S)\to L^1(S)}\leq C\, t.
\end{align*}

\medskip
For $j=2$, we apply \eqref{e4.1} to write
\begin{align*}
M_{2,t}(\sqrt{\L})(f)(z)
&=A(t)^{\frac12}\partial_t\!\left[\varphi_{\!\sqrt{\L}}(t)\,\widetilde{m}_2(\sqrt{\L})(f)(z)\right]\\
&={1\over\nu_n}A(t)^{\frac12}\partial_t\!\left[\int_{\SS^{n-1}}\delta(x(t,\omega))^{-\frac12}\widetilde{m}_2(\sqrt{\L})(f)(z\cdot x(t,\omega))\,\d\omega\right].
\end{align*}
We then have $M_{2,t}(\sqrt{\L})(f)(z)=T_{1,t}(f)(z) + T_{2,t}(f)(z)$, with
\begin{align*}
T_{1,t}(f)(z) 
=\frac{1}{\nu_n} A(t)^{\frac12}\int_{\SS^{n-1}}\partial_t[\delta(x(t,\omega))^{-\frac12}]\,\widetilde{m}_2(\sqrt{\L})(f)(z\cdot x(t,\omega))\,\d\omega,
\end{align*}
and
\begin{align*}
T_{2,t}(f)(z)
=\frac{1}{\nu_n} A(t)^{\frac12}\int_{\SS^{n-1}}\delta(x(t,\omega))^{-\frac12}\langle\nabla \widetilde{m}_2(\sqrt{\L})(f)(z\cdot x(t,\omega)),\partial_t[z\cdot x(t,\omega)]\rangle_{\mathbf{g}}\,\d \omega.
\end{align*}
Here $\langle\cdot,\cdot\rangle_{\mathbf{g}}$ denotes the Riemannian metric on $S$ and $\nabla$ is the gradient given by \eqref{riemgrad}.

On the one hand, we deduce from Lemmas \ref{lem2.1} and \ref{lem2.6} (i) that 
\begin{align*}
\|T_{1,t}\|_{\fh^1(S)\to L^1(S)}\leq C\,
\left(\e^{\frac{Q}{2}t}\int_{\SS^{n-1}}|\partial_t[\delta(x(t,\omega))^{-\frac12}]|\,\d\omega\right)\,
\|\widetilde{m}_2(\sqrt{\L})\|_{\fh^1(S)\to L^1(S)}\leq C\,t,
\end{align*}
that is, $T_{1,t}$ is bounded from $\fh^1(S)$ to $L^1(S)$. On the other hand, we know that the factor $|\partial_t [z \cdot x(t,\omega)]|_{\mathbf{g}} \equiv 1$, since $z \cdot x(\cdot,\omega)$ is a geodesic on $S$. Therefore, by successively applying the Cauchy–Schwarz inequality, and Lemmas \ref{lem2.1} and \ref{lem2.6}(ii), we obtain
\begin{align*}
\|T_{2,t}(f)\|_{L^1(S)}\leq C\,
\left(\e^{{Q\over 2}t}\int_{\SS^{n-1}}\delta(x(t,\omega))^{-{1\over 2}}\,\d \omega\right)
\||\nabla\widetilde{m}_2(\sqrt{\L})(f)|_{\mathbf{g}}\|_{L^1(S)}\leq C\,t\,\|f\|_{\fh^1(S)}.
\end{align*}

In conclusion, we have proved that \eqref{e5.11} holds for $j=1,\,2$. The proof of part (a) of Proposition \ref{prop: main} in the long-time case is thus completed. We omit the proof of part (b), as it follows from a similar argument.
\end{proof}

For the short-time case, we need the following lemma, which is similar to Lemma \ref{lem:spherical long time} but involves different remainder terms. The proof relies on an expansion in terms of Bessel functions, rather than on the Harish-Chandra expansion used in the large-time case.

\begin{lemma}\label{lem:spherical short time}
Let $A(t)$ be the volume element given by \eqref{def:VolumeA}. Suppose that $0<t\le1$ and $\lambda > 1$ are such that $\lambda t \ge 1$. Then the following results hold.

\begin{asparaenum}[\rm (a)]
\item There exist symbols $a_t\in\cS^{-(n+1)/2}$, uniformly in $0<t\le1$, such that 
\begin{align*}
\varphi_{\lambda}(t)
=c_n\, A(t)^{-\frac12}\lambda^{-\frac{n-1}{2}}
\cos\!\left(\lambda t - \frac{n-1}{4}\uppi\right)\,
+\,\mathfrak{r}_t(\lambda),
\end{align*}
where $c_n=2^{(n-1)/2}\uppi^{-1/2}\Gamma(n/2)$ and 
\begin{align*}
\mathfrak{r}_t(\lambda)
=
\e^{\i \lambda t}a_t(\lambda t)+\e^{-\i \lambda t}a_t(-\lambda t).
\end{align*}

\item There exist symbols $b_t\in\cS^{-(n-1)/2}$, uniformly in $0<t\le1$, such that
\begin{align*}
\varphi'_{\lambda}(t)
=-c_n\, A(t)^{-\frac12}\lambda^{-\frac{n-3}{2}}
\sin\!\left(\lambda t - \frac{n-1}{4}\uppi\right)\,
+\,\mathfrak{s}_t(\lambda),
\end{align*}
where
\begin{align*}
\mathfrak{s}_t(\lambda)
= t^{-1}
\left(\e^{\i \lambda t}b_t(\lambda t)+\e^{-\i \lambda t}b_t(-\lambda t)\right).
\end{align*}
\end{asparaenum}
\end{lemma}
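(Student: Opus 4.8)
The plan is to derive Lemma \ref{lem:spherical short time} from the classical large-argument asymptotics of the Bessel function, in parallel with the proof of Lemma \ref{lem:spherical long time}, but using a Bessel-type representation of $\varphi_\lambda(t)$ valid in the short-time regime rather than the Harish--Chandra expansion. The starting point is the well-known fact that, for $0<t\le 1$, the spherical function admits an expansion of the form
\begin{align*}
\varphi_\lambda(t)
=A(t)^{-\frac12}\Big(\alpha(t)\,(\lambda t)^{\frac12}J_{\frac{n-2}{2}}(\lambda t)+\text{lower-order terms}\Big),
\end{align*}
where $\alpha$ is a smooth positive function on $[0,1]$ with $\alpha(0)\neq0$ (arising because, after the substitution killing the volume factor $A(t)^{1/2}$, the radial part of the Laplacian on $S$ is a small perturbation of the Euclidean radial Laplacian on $\R^n$ for small $t$). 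More precisely, I would invoke the Bessel-function representation of $\varphi_\lambda$ that appears in the literature on Damek--Ricci spaces (e.g.\ the analogue of \cite[Ch.~IV]{AnJi} or the local expansions in \cite{APV15}); the key structural input is that $A(t)^{1/2}\varphi_\lambda(t)$ equals $(\lambda t)^{1/2}J_{(n-2)/2}(\lambda t)$ times a symbol in $t$ of order $0$, plus a remainder that is $(\lambda t)^{1/2}J_{(n-2)/2}(\lambda t)$ or $(\lambda t)^{1/2}J_{n/2}(\lambda t)$ times a symbol of order $-1$ in $\lambda t$.

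The key steps, in order, are: (1) record the exact large-argument asymptotic $J_\nu(s)=\sqrt{2/(\uppi s)}\,\cos(s-\nu\uppi/2-\uppi/4)+\text{O}(s^{-3/2})$, valid for $s=\lambda t\ge1$, together with the derivative formula $J_\nu'(s)=J_{\nu-1}(s)-\nu s^{-1}J_\nu(s)$; (2) substitute this into the Bessel representation of $A(t)^{1/2}\varphi_\lambda(t)$ with $\nu=(n-2)/2$, so that the leading term becomes $c_n'\,\lambda^{-(n-1)/2}t^{-(n-1)/2}\cdot t^{(n-1)/2}\cos(\lambda t-(n-1)\uppi/4)$ — here the powers of $t$ cancel against the $A(t)^{-1/2}\sim c\,t^{-(n-1)/2}$ prefactor to leave exactly $c_n A(t)^{-1/2}\lambda^{-(n-1)/2}\cos(\lambda t-(n-1)\uppi/4)$, matching the stated constant $c_n=2^{(n-1)/2}\uppi^{-1/2}\Gamma(n/2)$ after tracking the normalization; (3) collect every error: the $\text{O}(s^{-3/2})$ Bessel remainder, the contribution of the order-$(-1)$ symbol multiplying the Bessel factor, and the lower-order Bessel terms in the representation, all of which, after writing $\cos$ and $\sin$ via $\e^{\pm\i\lambda t}$, take the form $\e^{\i\lambda t}a_t(\lambda t)+\e^{-\i\lambda t}a_t(-\lambda t)$ with $a_t\in\cS^{-(n+1)/2}$ uniformly in $t$ — note that the remainder here is a symbol in the variable $\lambda t$, not in $\lambda$, which is the feature distinguishing this lemma from Lemma \ref{lem:spherical long time} and which is exactly what one expects since the short-time regime is governed by the dimensionless quantity $\lambda t$; (4) for part (b), differentiate the representation in $t$: the $t$-derivative falls either on the symbol in $t$ (producing a harmless factor since these symbols are uniformly $C^\infty$ on $[0,1]$), on $A(t)^{-1/2}$ (producing $A'(t)/A(t)\sim (n-1)/t$, which is the source of the $t^{-1}$ in $\mathfrak{s}_t$), or on the Bessel factor $(\lambda t)^{1/2}J_\nu(\lambda t)$, where $\partial_t[(\lambda t)^{1/2}J_\nu(\lambda t)]=\lambda(\lambda t)^{-1/2}\cdot\frac{d}{ds}[s^{1/2}J_\nu(s)]\big|_{s=\lambda t}$ contributes the leading $-c_n A(t)^{-1/2}\lambda^{-(n-3)/2}\sin(\lambda t-(n-1)\uppi/4)$ term after the same power-of-$t$ bookkeeping, and the errors, now carrying one extra factor of $\lambda$ or $t^{-1}$, assemble into $t^{-1}(\e^{\i\lambda t}b_t(\lambda t)+\e^{-\i\lambda t}b_t(-\lambda t))$ with $b_t\in\cS^{-(n-1)/2}$.

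The main obstacle I anticipate is twofold. First, one must pin down the precise Bessel-function expansion of $\varphi_\lambda(t)$ for small $t$ on a general Damek--Ricci space — on symmetric spaces this is standard, but on non-symmetric Damek--Ricci spaces one needs the analogue, which does hold because the relevant radial ODE is still a one-dimensional Schr\"odinger-type equation whose potential is smooth near $t=0$; the cleanest route is to cite the explicit hypergeometric form \eqref{e2.10}, apply the quadratic transformation relating ${}_2F_1$ at argument $-\sinh^2(t/2)$ to a Bessel asymptotic in the joint limit $\lambda\to\infty$, $t\to0$ with $\lambda t$ bounded below, or alternatively to cite the uniform asymptotics for ${}_2F_1$ with large parameters. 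Second — and this is the genuinely delicate point — one must verify that the remainder symbols are \emph{uniform in $t\in(0,1]$} and have the claimed symbol orders \emph{in the variable $\lambda t$}: each $t$-derivative of $a_t(\lambda t)$ brings down a factor of $\lambda$, so one must check that the $\cS$-seminorms of $s\mapsto a_t(s)$ are bounded independently of $t$, which amounts to controlling the $t$-regularity of the coefficient functions in the Bessel expansion on the compact interval $[0,1]$ and is ultimately a consequence of the smoothness of $A(t)$ and of the ${}_2F_1$-coefficients there. Once these two points are secured, the rest is the bookkeeping of powers of $t$ and the trigonometric identity $\cos s=\frac12(\e^{\i s}+\e^{-\i s})$, exactly as in the proof of Lemma \ref{lem:spherical long time}.
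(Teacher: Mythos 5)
Your part (a) is essentially the paper's proof: they cite Astengo's Bessel-function expansion
\begin{align*}
\varphi_\lambda(t)
=\frac{\Gamma(\frac{n}{2})}{\Gamma(\frac{n-1}{2})}\,
t^{\frac{n-1}{2}} A(t)^{-\frac12}
\sum_{\ell=0}^\infty 2^{\frac{n}{2}-1+\ell}\Gamma\!\left(\tfrac{n-1}{2}+\ell\right)
c_\ell(t)\,J_{\frac{n}{2}-1+\ell}(\lambda t)\,
(\lambda t)^{-\frac{n}{2}+1-\ell}t^{2\ell},
\end{align*}
with $c_0\equiv 1$ and $|c_\ell(t)|\le C\,4^{-\ell}$, and then feed in the large-argument asymptotic of $J_\mu$, exactly as you outline. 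So (a) matches.

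For part (b) you diverge from the paper, and the divergence exposes a genuine gap in your plan. You propose to differentiate the Bessel representation directly in $t$, arguing that when the derivative falls on the coefficient functions $c_\ell(t)$ this is ``harmless since these symbols are uniformly $C^\infty$ on $[0,1]$.'' But the only bound the cited expansion supplies is $|c_\ell(t)|\le C\,4^{-\ell}$; it gives no control on $c_\ell'(t)$, and you cannot differentiate the series term by term without some uniform estimate on $c_\ell'(t)$ guaranteeing convergence with the claimed symbol behavior. You would need to reprove (or at least re-examine) Astengo's expansion to extract derivative bounds, which is nontrivial. The paper avoids this entirely with a cleaner device: using $\partial_z{}_2F_1(a,b;c;z)=\frac{ab}{c}{}_2F_1(a+1,b+1;c+1;z)$ one finds that, up to the explicit factor $-\frac{Q^2+4\lambda^2}{4n}\sinh t$, the derivative $\varphi_\lambda'(t)$ is the spherical function on the Damek--Ricci space with $\widetilde{m}_\fz=m_\fz+2$ (hence $\widetilde Q=Q+2$, $\widetilde n=n+2$), to which part (a) applies verbatim. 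This dimension-shift argument sidesteps the need for any $t$-derivative estimates on the coefficients, and is the step your proposal is missing.

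A smaller remark: be careful with the Bessel order. The summand involves $J_{\frac{n}{2}-1+\ell}$, so the principal term has order $\tfrac{n}{2}-1=\tfrac{n-2}{2}$, and it is multiplied by $(\lambda t)^{-\frac{n}{2}+1}$, not $(\lambda t)^{1/2}$ as you wrote. The factor $(\lambda t)^{1/2}$ appears only after invoking $J_\mu(r)\sim(\pi r/2)^{-1/2}\cos(r-\mu\pi/2-\pi/4)$, so the bookkeeping of powers is slightly different from what you describe, although the final constant $c_n$ does come out right once you track everything.
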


\begin{proof}
It follows from \cite[Theorem 3.1]{As1} that, for all $0<t\le1$,
\begin{align*}
\varphi_\lambda(t)
=\frac{\Gamma(\frac{n}{2})}{\Gamma(\frac{n-1}{2})}\,
t^{\frac{n-1}{2}} A(t)^{-\frac12}
\sum_{\ell=0}^\infty 2^{\frac{n}{2}-1+\ell}\Gamma\!\left({n-1\over 2}+\ell\right)
c_\ell(t)\,J_{\frac{n}{2}-1+\ell}(\lambda t)\,
(\lambda t)^{-\frac{n}{2}+1-\ell}t^{2\ell},
\end{align*}
where $J_{\mu}$ denotes the classical Bessel function of order $\mu$, and the coefficients $a_\ell$ satisfy
\begin{align*}
c_0 \equiv 1
\qquad\textnormal{and}\qquad
|c_\ell(t)|\leq C\,4^{-\ell}\quad\forall\,\ell\in\mathbb{N}^*.
\end{align*}
It is well known that $J_{\mu}$ has the asymptotic expansion
\begin{align}\label{e3.1.2}
J_\mu(r)=\left({\uppi\,r\over 2}\right)^{-\frac12}
\cos\!\left(r-\frac{\uppi\mu}{2}-\frac{\uppi}{4}\right)
+\e^{\i r}s_\mu(r)+\e^{-\i r}s_\mu(-r)\quad \text{as }r \to \infty,
\end{align}
where $s_\mu$ is a symbol of order $-3/2$, see, for instance, \cite[p. 356]{St}. Hence, there exists a symbol $a_t \in\cS^{-(n+1)/2}$, uniformly in $0<t\leq 1$, such that
\begin{align*}
\varphi_\lambda(t)
={2^{\frac{n-1}{2}}\Gamma(\frac{n}{2})\over \sqrt{\uppi}}\, 
A(t)^{-\frac12}\lambda^{-\frac{n-1}{2}}
\cos\!\left(\lambda t-\frac{n-1}{4}\uppi\right)
+\e^{\i \lambda t}a_t(\lambda t)+\e^{-\i \lambda t}a_t(-\lambda t).
\end{align*}
Thus, we have proved (a).

For (b), we differentiate both sides of \eqref{e2.10} with respect to $t$, and using the formula 
\begin{align*}
{\partial\over \partial z}\big[{}_2F_1(a,b;c;z)\big]={ab\over c}{}_2F_1(a+1,b+1;c+1;z).
\end{align*}
We obtain
\begin{align}\label{e3.100}
\varphi_\lambda'(t)&=-\left[{}_2F_1\!\left({Q\over 2}-\i\lambda,{Q\over 2}+\i\lambda;{n\over 2};\cdot\right)\right]'\!\!\left(-\sinh^2{t\over 2}\right)\,\sinh{t\over 2}\,\cosh{t\over 2}\notag\\ 
&=\left( -{Q^2+4\lambda^2\over 4n}\sinh t \right) {}_2F_1\!\left({Q+2\over 2}-\i\lambda,{Q+2\over 2}+\i\lambda;{n+2\over 2};-\sinh^2{t\over 2}\right).
\end{align}
Another key observation is that, by \eqref{e2.10}, the function ${}_2F_1((Q+2)/2-\i\lambda,(Q+2)/2+\i\lambda;(n+2)/2;-\sinh^2(t/2))$ is the spherical function on the Damek–Ricci space with $\widetilde{m}_\fv=m_\fv$ and $\widetilde{m}_\fz=m_\fz+2$ so that $\widetilde{Q}=Q+2$ and $\widetilde{n}=n+2$. Therefore, we deduce from (a) and \eqref{def:VolumeA} that
\begin{align*}
&{}_2F_1\!\left({Q+2\over 2}-\i\lambda,{Q+2\over 2}+\i\lambda;{n+2\over 2};-\sinh^2{t\over 2}\right)\\
=&\, c_{n+2}\,(\sinh t)^{-1}
A(t)^{-\frac{1}{2}}\lambda^{-\frac{n+1}{2}}
\sin\!\left(\lambda t - \frac{n-1}{4}\uppi\right)
+\e^{\i\lambda t}\widetilde{a}_t(\lambda t)
+\e^{-\i\lambda t}\widetilde{a}_t(-\lambda t),
\end{align*}
where $\widetilde{a}_t\in\cS^{-(n+3)/2}$, uniformly in $0<t\leq 1$. Substituting this equality into \eqref{e3.100}, we obtain
\begin{align*}
\varphi'_\lambda(t)
=-{2^{\frac{n-1}{2}}\Gamma(\frac{n}{2})\over \sqrt{\uppi}}\,
A(t)^{-\frac12} \lambda^{-\frac{n-3}{2}}
\sin\!\left(\lambda t - \frac{n-1}{4}\uppi\right)\,
+\,t^{-1}\left(\e^{\i\lambda t}b_t(\lambda t)+\e^{-\i\lambda t}b_t(-\lambda t)\right),
\end{align*}
for some symbols $b_t\in\cS^{-(n-1)/2}$, uniformly in $0<t\leq 1$. The proof is therefore completed.
\end{proof}

Let us use the above lemma to prove Proposition \ref{prop: main} in the short-time case.

\begin{proof}[Proof of Proposition \ref{prop: main} for $|t|\le1$]
We assume, without loss of generality, that $0<t\le1$. Let us assume that $m$ is an even symbol in $S^{-(n-1)/2}$ in order to prove part (a). Recall that $\eta\in C_c^\infty(\R)$ is an even cut-off function satisfying $\eta \equiv 1$ on $[-1,1]$. Define
\begin{align*}
m_{t,0}(\lambda):=\eta( \lambda t)\,m(\lambda)\,\cos( \lambda t),
\end{align*}
which also belongs to $S^{-(n-1)/2}$, and hence to $S^{0}$, uniformly in $0<t\le1$. It follows from Lemma \ref{lem2.6} (i) that the operator $m_{t,0}(\sqrt{\L})$ is bounded from $\fh^1(S)$ to $L^1(S)$. The core of this proof therefore lies in establishing
\begin{align}\label{e5.4}
\|(1-\eta(t\sqrt{\L}))\,m(\sqrt{\L})\,\cos(t\sqrt{\L})\|_{\fh^1(S)\to L^1(S)}\leq C
\end{align}
for some constants $C>0$ independent of $t$.

In view of \eqref{ecos} and Lemma \ref{lem:spherical short time}, there exist constants $c_1,\,c_2>0$ such that, on $\supp(1-\eta(t\,\cdot))$,
\begin{align}\label{e5.5}
\cos( \lambda t)
=c_1\, A(t)^{\frac12}\lambda^{\frac{n-1}{2}}\varphi_\lambda(t)
+c_2\,A(t)^{\frac12} \lambda^{\frac{n-3}{2}}\varphi_\lambda'(t)
+\e^{\i \lambda t}a_{1,t}( \lambda t)
+\e^{-\i \lambda t}a_{2,t}( \lambda t)
\end{align}
with $A(t)$ given by \eqref{def:VolumeA} and $a_{1,t}, a_{2,t}\in\cS^{-1}$ uniformly in $0<t\leq 1$. Note that we may write the remainder term as
\begin{align*}
E_{t}(\lambda)
=\e^{\i \lambda t}a_{1,t}( \lambda t)
+\e^{-\i \lambda t}a_{2,t}( \lambda t)
=
\cos( \lambda t)\,\widetilde{a}_{1,t}( \lambda t)
+{\sin( \lambda t)\over \lambda t}\widetilde{a}_{2,t}( \lambda t),
\end{align*}
with $\widetilde{a}_{1,t}\in\cS^{-1}$ and $ \widetilde{a}_{2,t}\in\cS^0$, uniformly in $0<t\leq 1$. 

Note that the remainder term involves only subcritical symbols, so the desired estimate can be obtained by using the method in the proof of \cite[Theorem 6.1]{MuVa}. Following their arguments, there exists a cut-off function $\beta\in C^\infty_c(\R^n)$, vanishing on $[-1,1]$, such that 
\begin{align*}
(1-\eta( \lambda t))\,m(\lambda)\,\widetilde{a}_{1,t}( \lambda t)=\sum_{\ell=[-\log t]}^\infty M_{\ell,t}(2^{-\ell} \lambda),
\end{align*}
where
\begin{align*}
M_{\ell,t}(\lambda):=\beta(\lambda)\,m(2^\ell\lambda)\,
\widetilde{a}_{1,t}(2^\ell \lambda t),
\end{align*}
satisfies, for all $N=1,2,\dots$, 
\begin{align*}
\|M_{\ell,t}\|_{C^N} \leq C_N\,t^{-1} 2^{-\frac{n+1}{2}\ell},
\end{align*}
uniformly in $\ell\geq [-\log t]$ and $0<t\leq 1$. We deduce from \cite[(6.3)]{MuVa} that 
\begin{align*}
\|M_{\ell,t}(2^{-\ell}\sqrt{\L})\,\cos(t\sqrt{\L})\|_{L^1(S)\to L^1(S)}
\leq C\,t^{-1}2^{-\frac{n+1}{2}\ell} 2^{\frac{n-1}{2}\ell}
=C\,t^{-1}2^{-\ell}.
\end{align*}
It follows that 
\begin{align*}
&\|(1-\eta(t\sqrt{\L}))\,m(\sqrt{\L})\,\widetilde{a}_{1,t}(t\sqrt{\L})\,\cos(t\sqrt{\L})\|_{L^1(S)\to L^1(S)}\\
\leq &\sum_{\ell=[-\log t]}^\infty \|M_{\ell,t}(2^{-\ell}\sqrt{\L})\,\cos(t\sqrt{\L})\|_{L^1(S)\to L^1(S)}\leq C,
\end{align*}
and a similar argument as above yields 
\begin{align*}
\left\|(1-\eta(t\sqrt{\L}))\,m(\sqrt{\L})\,{\widetilde{a}_{2,t}(t\sqrt{\L})}{\sin(t\sqrt{\L})\over t\sqrt{\L}}\right\|_{L^1(S)\to L^1(S)}\leq C.
\end{align*}
The operator $(1 - \eta(t\sqrt{\L}))m(\sqrt{\L})E_t(\sqrt{\L})$, associated with the remainder term, is therefore bounded from $L^1(S)$ to $L^1(S)$.

However, the above method does not allow us to estimate the principal terms in \eqref{e5.9}. Let us define
\begin{align*}
\widetilde{m}_{1,t}(\lambda):=\varphi_\lambda(t)\,m_{1,t}(\lambda)
\quad\textnormal{and}\quad
\widetilde{m}_{2,t}(\lambda):=\varphi'_\lambda(t)\,m_{2,t}(\lambda),
\end{align*}
with
\begin{align*}
m_{1,t}(\lambda):=(1-\eta( \lambda t))\, m(\lambda)\,\lambda^{\frac{n-1}{2}}\in\cS^0\quad \text{and}\quad m_{2,t}(\lambda):=(1-\eta( \lambda t))\, m(\lambda)\,\lambda^{\frac{n-3}{2}}\in\cS^{-1}
\end{align*}
uniformly in $0<t\leq 1$. Given the estimate of the remainder term, the proof of \eqref{e5.4} reduces to showing that both the operators $\widetilde{m}_{1,t}(\sqrt{\L})$ and $\widetilde{m}_{2,t}(\sqrt{\L})$ are bounded from $\fh^1(S)$ to $L^1(S)$. 

On the one hand, since $m_{1,t} \in\cS^0$, we know from Lemma \ref{lem2.6} (i) that the operator $m_{1,t}(\L)$ is bounded from $\fh^1(S)$ to $L^1(S)$. On the other hand, by combining \eqref{e4.1} with Lemma \ref{lem2.1}, we obtain
\begin{align*}
\|\varphi_{\!\sqrt{\L}}(t)\|_{L^1(S)\to L^1(S)}\leq C\int_{\SS^{n-1}}\delta(x(t,\omega))^{-\frac12}\,\d\omega\leq C.
\end{align*}
Therefore, we have 
\begin{align*}
\|\widetilde{m}_{1,t}(\sqrt{\L})\|_{\fh^1(S)\to L^1(S)}\leq \|\varphi_{\!\sqrt{\L}}(t)\|_{L^1(S)\to L^1(S)}\|m_{1,t}(\sqrt{\L})\|_{\fh^1(S)\to L^1(S)}\leq C.
\end{align*}
In other words, the operator $\widetilde{m}_{1,t}(\sqrt{\L})$ is also bounded from $\fh^1(S)$ to $L^1(S)$.

For the operator $\widetilde{m}_{2,t}(\sqrt{\L})$, we apply \eqref{e4.1} once again to write
\begin{align*}
\widetilde{m}_{2,t}(\sqrt{\L})(f)(z)&=\partial_t\!\left[\varphi_{\!\sqrt{\L}}(t)\,m_{2,t}(\sqrt{\L})(f)(z)\right]\\
&={1\over\nu_n}\partial_t\!\left[\int_{\SS^{n-1}}\delta(x(t,\omega))^{-{1\over 2}}m_{2,t}(\sqrt{\L})(f)(z\cdot x(t,\omega))\,\d\omega\right],
\end{align*}
from where we split up $\widetilde{m}_{2,t}(\sqrt{\L})=T_{1,t}+T_{2,t}+T_{3,t}$, where the operators $T_j$, with $j=1,2,3$, are defined by
\begin{align*}
T_{1,t}(f)(z)
=
{1\over\nu_n} \int_{\SS^{n-1}}\partial_t[\delta(x(t,\omega))^{-\frac12}]\,m_{2,t}(\sqrt{\L})(f)(z\cdot x(t,\omega))\,\d\omega,
\end{align*}
\begin{align*}
T_{2,t}(f)(z)
=
{1\over\nu_n} \int_{\SS^{n-1}}\delta(x(t,\omega))^{-\frac12}\,(\partial_t m_{2,t})(\sqrt{\L})(f)(z\cdot x(t,\omega))\,\d \omega,
\end{align*}
and
\begin{align*}
T_{3,t}(f)(z)
=
{1\over\nu_n} \int_{\SS^{n-1}}\delta(x(t,\omega))^{-\frac12}\langle\nabla m_{2,t}(\sqrt{\L})(f)(z\cdot x(t,\omega)),\partial_t[z\cdot x(t,\omega)]\rangle_{\mathbf{g}}\,\d \omega.
\end{align*}

On the one hand, we deduce straightforwardly from Lemma \ref{lem2.1} and Lemma \ref{lem2.6} (i) that 
\begin{align*}
\|T_{1,t}\|_{\fh^1(S)\to L^1(S)}
\leq\left(\int_{\SS^{n-1}}|\partial_t[\delta(x(t,\omega))^{-\frac12}]|\,\d\omega\right)\,
\|m_{2,t}(\sqrt{\L})\|_{\fh^1(S)\to L^1(S)}\leq C.
\end{align*}
On the other hand, since $m_{2,t}(\lambda)=(1-\eta( \lambda t))m(\lambda)\lambda^{n/2-1}$, we have $\partial_t m_{2,t}\in\cS^0$ uniformly in $0<t\leq 1$. Again, using Lemma \ref{lem2.1} and Lemma \ref{lem2.6} (i), we obtain
\begin{align*}
\|T_{2,t}\|_{\fh^1(S)\to L^1(S)}\leq\left(\int_{\SS^{n-1}}\delta(x(t,\omega))^{-\frac12}\,\d\omega\right)\,\|(\partial_t m_{2,t})(\sqrt{\L})\|_{\fh^1(S)\to L^1(S)}\leq C.
\end{align*}
By arguments similar to those in the proof of the long-time case, we deduce from Lemmas \ref{lem2.1} and \ref{lem2.6} (ii) that 
\begin{align*}
\|T_{3,t}(f)\|_{L^1(S)}\leq \left(\int_{\SS^{n-1}}\delta(x(t,\omega))^{-\frac12}\,\d \omega\right)\,\||\nabla\mathfrak{m}_{2,t}(\sqrt{\L})(f)|_{\mathbf{g}}\|_{L^1(S)}\leq C\,\|f\|_{\fh^1(S)}.
\end{align*}
Combining these estimates, we obtain that the operator $\widetilde{m}_{2,t}(\sqrt{\L})$ is bounded from $\fh^{1}(S)$ to $L^1(S)$. The estimate \eqref{e5.4} is therefore proved, and the proof of part (a) of Proposition \ref{prop: main} is complete. The proof of part (b) is analogous to that of part (a) and is therefore omitted.
\end{proof}

Once Proposition \ref{prop: main} is proved, the sufficient parts of Theorems \ref{thm:main} and \ref{thm:main2} can be regarded as its corollaries. We now sketch their proofs below.

\begin{proof}[Proof of Theorem \ref{thm:main2} (sufficient part)]
According to the spectral theorem, the solution to the Cauchy problem \eqref{eq: wave} is given by
\begin{align*}
u(t,x)=\cos (t\sqrt{\L})(f)(x)+{\sin(t\sqrt{\L})\over \sqrt{\L}}(g)(x).
\end{align*} 
We deduce from Proposition \ref{prop: main} that the estimate
\begin{align*} 
\|u(t,\cdot)\|_{L^1(S)}\lesssim (1+|t|) \big(\|(\Id+\L)^{\frac{\alpha_0}{2}}f\|_{\fh^1(S)}+\|(\Id+\L)^{{\frac{\alpha_1}{2}} }g\|_{\fh^1(S)}\big)
\end{align*}
holds in the critical case where
\begin{align*}
\alpha_0=(n-1)\left|{1\over p}-{1\over 2}\right|
\quad\text{and}\quad \alpha_1=(n-1)\left|{1\over p}-{1\over 2}\right|-1,
\end{align*}
and therefore also in the supercritical cases.
\end{proof}

\begin{proof}[Proof of Theorem \ref{thm:main} (sufficient part)]
By using an interpolation argument based on Lemma \ref{lem2.7} (see, for instance, \cite[Proposition 4.3]{WaYa}), we know that for $1<p<\infty$ and for the even symbols  $m_1\in\cS^{-(n-1)|1/p-1/2|}$ and $m_2\in\cS^{-(n-1)|1/p-1/2|-1}$, 
\begin{align*}
\|m_1(\sqrt{\L})\,\cos(t\sqrt{\L})\|_{L^p(S)\to L^p(S)}\lesssim (1+|t|)^{2|1/p-1/2|},
\end{align*}
and
\begin{align*}
\left\|m_2(\sqrt{\L})\,{\sin(t\sqrt{\L})\over\sqrt{\L}}\right\|_{L^p(S)\to L^p(S)}\lesssim 1+|t|,
\end{align*}
which directly implies the sufficient part of Theorem \ref{thm:main}.
\end{proof}

We have proved the sufficient parts of Theorems \ref{thm:main} and \ref{thm:main2}. As mentioned above, the proofs of their necessary parts are straightforward adaptations of the arguments for the $ax+b$ group given in \cite{WaYa}, and are therefore omitted.

\medskip
\noindent\textbf{Acknowledgements.} Y. Wang and L. Yan are supported by National Key R$\&$D Program of China 2022YFA1005700. L. Yan is supported by NNSF
of China 12571111. H.-W. Zhang receives funding from
the Deutsche Forschungsgemeinschaft (DFG) through SFB-TRR 358/1 2023
—Project 491392403.


\smallskip
\bibliographystyle{plain}

\begin{thebibliography}{99}
\bibitem{AnDaYa}J.-P. Anker, E. Damek and C. Yacoub, Spherical analysis on harmonic $AN$ groups. \textit{Ann. Scuola Norm. Sup. Pisa Cl. Sci. (4)} \textbf{23} (1996), no. 4, 643--679.

\bibitem{AP14}J.-P. Anker and V. Pierfelice, Wave and {K}lein-{G}ordon equations on hyperbolic spaces. \textit{Anal. PDE} \textbf{7} (2014), no. 4, 953--995.

\bibitem{AnPiVa}J.-P. Anker, V. Pierfelice and M. Vallarino, The wave equation on hyperbolic spaces. \textit{J. Differential Equations} \textbf{252} (2012), no. 10, 5613--5661.

\bibitem{APV15}J.-P. Anker, V. Pierfelice, and M. Vallarino, The wave equation on {D}amek-{R}icci spaces. \textit{Ann. Mat. Pura Appl. (4)} \textbf{194} (2015), no. 3, 731--758.

\bibitem{AZ24}J.-P. Anker and H.-W. Zhang, Wave equation on general {R}iemannian noncompact symmetric spaces. \textit{Amer. J. Math.} \textbf{146} (2024), no. 4, 983--1031.


\bibitem{As1}F. Astengo, A class of $L^p$ convolutors on harmonic extensions of H-type groups. \textit{J. Lie Theory} \textbf{5} (1995), no. 2, 147--164.

\bibitem{As}F. Astengo, The maximal ideal space of a hear algebra on Solvable extensions of $H$-type groups. \textit{Boll. Unione Mat. Ital. A (7)} \textbf{9} (1995), no. 1, 157--165.

\bibitem{CaMaMe10}A. Carbonaro, G. Mauceri and S. Meda, Comparison of spaces of Hardy type for the Ornstein-Uhlenbeck operator. \textit{Potential Anal.} \textbf{33} (2010), no. 1, 85--105.

\bibitem{CaMaMe09}A. Carbonaro, G. Mauceri and S. Meda, $H^1$ and BMO for certain locally doubling metric measure spaces. \textit{Ann. Sc. Norm. Super. Pisa Cl. Sci. (5)} \textbf{8} (2009), no. 3, 543--582.

\bibitem{CDKR1}M. Cowling, A. H. Dooley, A. Kor\'anyi and F. Ricci, H-type groups and Iwasawa decompositions. \textit{Adv. Math.} \textbf{87} (1991), 1--41.

\bibitem{CDKR2}M. Cowling, A. H. Dooley, A. Kor\'anyi and F. Ricci, An approach to symmetric spaces of rank one via groups of Heisenberg-type. \textit{J. Geom. Anal.} \textbf{8} (1998), 199--237.

\bibitem{CwJa}M. Cwikel and S. Janson, Interpolation of analytic families of operators. \textit{Studia Math.} \textbf{79} (1984), no. 1, 61--71.



\bibitem{D1}E. Damek, Curvature of a semidirect extension of a Heisenberg-type nilpotent group. \textit{Colloq. Math.} \textbf{53} (1987), 249--253.

\bibitem{D2}E. Damek, The geometry of a semidirect extension of a Heisenberg type nilpotent group. \textit{Colloq. Math.} \textbf{53} (1987), 255--268.

\bibitem{DaRi1}E. Damek and F. Ricci, A class of nonsymmetric harmonic Riemannian spaces. \textit{Bull. Amer. Math. Soc. (N.S.)} \textbf{27} (1992), no. 1, 139--142.

\bibitem{DaRi2}E. Damek and F. Ricci, Harmonic analysis on solvable extensions of H-type groups. \textit{J. Geom. Anal.} \textbf{2} (1992), no. 3, 213--248.



\bibitem{Er2}A. Erd\'{e}lyi et al., \textit{Higher transcendental functions}. Vol. II, McGraw-Hill, New York, 1953.

\bibitem{G}D. Goldberg, A local version of real Hardy spaces. \textit{Duke Math. J.} \textbf{46} (1979), 27--42.





\bibitem{Ion00}A.D. Ionescu, Fourier integral operators on noncompact symmetric spaces of real rank one. \textit{J. Funct. Anal.} \textbf{174} (2000), no. 2, 274--300.

\bibitem{K}A. Kaplan, Fundamental solutions for a class of hypoelliptic PDE generated by composition of quadratic forms. \textit{Trans. Amer. Math. Soc.} \textbf{258} (1975), 145--159.


\bibitem{MV}S. Meda and S. Volpi, Spaces of Goldberg type on certain measured metric spaces. \textit{Ann. Mat. Pura Appl. (4)} \textbf{196} (2017), no. 3, 947--981.

\bibitem{MT11}J. Metcalfe and M. Taylor, Nonlinear waves on 3{D} hyperbolic space. \textit{Trans. Amer. Math. Soc.} \textbf{363} (2011), no. 7, 3489--3529.

\bibitem{Mi}A. Miyachi, On some estimates for the wave equation in $L^p$ and $H^p$. \textit{J. Fac. Sci. Univ. Tokyo Sect. IA Math.} \textbf{27} (1980), no. 2, 331--354.

\bibitem{MuTh}D. M\"{u}ller and C. Thiele, Wave equation and multiplier estimates on $ax+b$ groups. \textit{Studia Math.} \textbf{179} (2007), no. 2, 117--148.

\bibitem{MuVa}D. M\"{u}ller and M. Vallarino, Wave equation and multiplier estimates on Damek-Ricci spaces. \textit{J. Fourier Anal. Appl.} \textbf{16} (2010), 204--232.


\bibitem{Pe}J.C. Peral, $L^p$ estimates for the wave equation. \textit{J. Funct. Anal.} \textbf{36} (1980), no. 1, 114--145.





\bibitem{SSWZ20}Y. Sire, C.D. Sogge, C. Wang, and J. Zhang, Strichartz estimates and {S}trauss conjecture on non-trapping asymptotically hyperbolic manifolds. \textit{Trans. Amer. Math. Soc.} \textbf{373} (2020), no. 11, 7639--7668.

\bibitem{St}E.M. Stein, \textit{Harmonic analysis: Real variable methods, orthogonality and oscillatory integrals}. Princeton Math. Ser. \textbf{43}. Monographs in Harmonic Analysis \textbf{III}. Princeton Univ. Press, Princeton, NJ, 1993.

\bibitem{Ta}T. Tao, The weak-type $(1,1)$ of Fourier integral operators of order $-(n-1)/2$. \textit{J. Aust. Math. Soc.} \textbf{76} (2004), no. 1, 1--24. 

\bibitem{Tat01}D. Tataru, Strichartz estimates in the hyperbolic space and global existence for the semilinear wave equation. \textit{Trans. Amer. Math. Soc.} \textbf{353} (2001), no. 2, 795--807.

\bibitem{T}M. Taylor, Hardy spaces and $BMO$ on manifolds with bounded geometry. \textit{J. Geom. Anal.} \textbf{19} (2009), 137--190.




\bibitem{WaYa}Y. Wang and L. Yan, Sharp $L^p$-estimates for wave equation on $ax+b$ groups. Available at \href{https://arxiv.org/abs/2506.17531}{arxiv.org/abs/2506.17531}.

\bibitem{Zha21}H.-W. Zhang, Wave equation on certain noncompact symmetric spaces. \textit{Pure Appl. Anal.} \textbf{3} (2021), no. 2, 363--386.
\end{thebibliography}

\end{document}